\setlist{nolistsep,leftmargin=*}
\let\savedegree\degree
\let\degree\relax
\let\degree\savedegree
\newtheorem{thm}{Theorem}[section]
\newtheorem{lem}[thm]{Lemma}
\newtheorem{prop}{Proposition}
\newlength{\nomitemorigsep}
\renewcommand{\nomname}{}
\renewcommand\nomgroup[1]{%
    \item[\bfseries
    \ifstrequal{#1}{A}{{Sets}}{%
    \ifstrequal{#1}{B}{{Parameters}}{%
    \ifstrequal{#1}{C}{{First-Stage Variables}}{%
    \ifstrequal{#1}{D}{{Second-Stage Variables}}{\ifstrequal{#1}{E}{{Random Variables}}}}}}%
]}
\newcommand{\bsy}[1]{\boldsymbol{#1}}
\newcommand{\bb}[1]{\mathbb{#1}}
\newcommand{\fr}[1]{\mathfrak{#1}}
\newcommand{\wh}[1]{\widehat{#1}}
\newcommand{\mcal}[1]{\mathcal{#1}}
\newcommand{\wt}[1]{\widetilde{#1}}
\title{Distributionally Robust Optimization for a Resilient Transmission Grid During Geomagnetic Disturbances}
\author{
  Mowen Lu$^{\dag}$, Sandra D. Eksioglu$^\dag$, Scott J. Mason$^\dag$, Russell Bent$^{\ddag}$, Harsha Nagarajan$^{\ddag}$ \\
$^\dag$ Department of Industrial Engineering, Clemson University, SC, United States.\\
$^\ddag$ Applied Mathematics and Plasma Physics Group (T-5), \\ Los Alamos National Laboratory, NM, United States.
}
\date{}
\begin{document}

\maketitle
\vspace{-1cm}

\begin{abstract}
In recent years, there have been increasing concerns about the impacts of geomagnetic disturbances (GMDs) on electrical power systems. Geomagnetically-induced currents (GICs) can saturate transformers, induce hot-spot heating and increase reactive power losses. Unpredictable GMDs caused by solar storms can significantly increase the risk of transformer failure. In this paper, we develop a two-stage, distributionally robust (DR) optimization formulation that models uncertain GMDs and mitigates the effects of GICs on power systems through existing system controls (e.g., line switching, generator re-dispatch, and load shedding). This model assumes an ambiguity set of probability distributions for induced geo-electric fields which capture uncertain magnitudes and orientations of a GMD event. We employ state-of-the-art linear relaxation methods and reformulate the problem as a two-stage DR model. We use this formulation to develop a decomposition framework for solving the problem. We demonstrate the approach on the modified Epri21 system and show that the DR optimization method effectively handles prediction errors of GMD events.
\end{abstract}

\providecommand{\keywords}[1]{\textbf{\textit{Key words:}} #1}
\keywords{distributionally robust optimization, uncertain GMDs, transmission line switching, linearizations, AC Power Flow}

\makenomenclature
\nomenclature[A,01]{$\mathcal{N}^{a}, \mathcal{N}^{d}$}{set of nodes in the AC and DC circuit, respectively}
\nomenclature[A,02]{$\mathcal{G}$}{set of generators}
\nomenclature[A,03]{$\mathcal{G}_i$}{set of generators at node $i$}
\nomenclature[A,04]{$\mathcal{I} \subseteq \mathcal{N}^d$}{set of substation neutrals}
\nomenclature[A,05]{$\mathcal{E}^a, \mathcal{E}^d, \mathcal{E}$}{set of edges in the AC and DC circuit, respectively, where $\mathcal{E} = \mathcal{E}^a \cup \mathcal{E}^d$}
\nomenclature[A,06]{$\mathcal{E}^\tau \subseteq \mathcal{E}^a$}{set of transformer edges}
\nomenclature[A,14]{$\mathcal{E}_i$}{set of all edges connected to AC/DC node $i$, where $\mathcal{E}_i = \mathcal{E}_i^+ \cup \mathcal{E}_i^-$}
\nomenclature[A,15]{$\mathcal{E}_i^\tau \subseteq \mathcal{E}^\tau$}{set of DC edges used to compute $d_i^{qloss}$ (as described later) for node $i$}
\nomenclature[B,01]{$c_{i}^0$}{fixed cost for generator $i \in \mathcal{G}$}
\nomenclature[B,02]{$c_{i}^1,c_{i}^2$}{fuel cost coefficients of electricity generation $\rho_i$ from generator $i \in \mathcal{G}$}
\nomenclature[B,03]{$c^{R1}_i, c^{R2}_i$}{fuel cost coefficients of additional electricity generation $\Delta_i$ from  generator $i \in \mathcal{G}$  }
\nomenclature[B,05]{$\kappa_i$}{penalty of under and over-generation at bus $i \in \mcal{N}^a$}
\nomenclature[B,06]{$a_{m}$}{admittance of the grounding line at bus ${m} \in \mathcal{I}$, 0 if bus ${m} \not \in \mathcal{I}$} 
\nomenclature[B,07]{$\alpha_{ij}$}{transformer tap ratio of edge ${e} \in {\mathcal{E}^a}$, $1$ if no transformer is sitting on line $e$}
\nomenclature[B,08]{$a_{e}$}{DC admittance of edge ${e}\in {\mathcal{E}^d} $}
\nomenclature[B,09]{$ r_{e}$, $ x_{e} $}{resistance and reactance of line $e \in {\mathcal{E}^a} $} 
\nomenclature[B,10]{$ g_{e}$, $ b_{e} $}{conductance and susceptance of line $e \in {\mathcal{E}^a}  $}
\nomenclature[B,11]{$ g_{i}$, $ b_{i} $}{shunt conductance and susceptance at bus $ i \in {\mathcal{N}^a} $}
\nomenclature[B,12]{$ d_i^p $, $ d_i^q$}{real and reactive power demand at bus $i \in {\mathcal{N}^a}$}
\nomenclature[B,13]{$ b_{e}^c $}{line charging susceptance of line $e \in {\mathcal{E}^a} $}
\nomenclature[B,14]{$ s_{e} $}{apparent power limit on line $e \in {\mathcal{E}^a} $}
\nomenclature[B,15]{$\overline{\theta}$}{phase angle difference limit}
\nomenclature[B,16]{$ k_{e} $}{loss factor of transformer line ${e} \in \mathcal{E}^{\tau}$}  
\nomenclature[B,17]{$ \underline{v}_i$, $\overline{v}_i$ }{AC voltage limits at bus $i \in {\mathcal{N}^a}$ }
\nomenclature[B,18]{$ \underline{gp}_{i} $, $\overline{gp}_{i} $}{real power generation limits at generator $i \in \mcal{G}$}
\nomenclature[B,19]{$ \underline{gq}_{i} $, $\overline{gq}_{i} $}{reactive power generation limits at generator $i \in \mcal{G}$}
\nomenclature[B,20]{$-\overline{u}^{R}_i, \overline{u}^{R}_i$}{ramp-down and ramp-up limits for generator $i$, respectively}
\nomenclature[B,21]{$\vec{E}$}{the geo-electric field at the area of a transmission system}
\nomenclature[B,22]{$L^N_e$,$L^E_e$}{the northward and eastward components of the displacement of each transmission line $e \in \mcal{E}^d$, respectively}
\nomenclature[C,01]{$ z^g_{i} $}{1 if generator { $i \in \mathcal{G}$} is online; 0 otherwise}
\nomenclature[C,02]{$ z^a_{e} $}{1 if line { $e \in \mathcal{E}^a$} is switched on; 0 otherwise}
\nomenclature[C,03]{$\rho_i$}{reserved real-power output of generator $i \in \mathcal{N}^a$}
\nomenclature[D,01]{$\theta_i$}{phase angle at bus $i \in{\mathcal{N}^a}$}
\nomenclature[D,02]{$v_i$}{voltage magnitude at bus $i \in {\mathcal{N}^a }$}
\nomenclature[D,03]{$v_i^d$}{induced DC voltage magnitude at bus $i \in {\mathcal{N}^d}$}
\nomenclature[D,04]{$l_{e}^d$}{GIC flow on transformer line ${e} \in \mathcal{E}^{\tau} $}
\nomenclature[D,05]{$I_{e}^d$}{the effective GIC on transformer line ${e}\in \mathcal{E}^{\tau} $}
\nomenclature[D,06]{$ d_{i}^{qloss}$}{GIC-induced reactive power loss at bus $i\in \mathcal{N}^a$}
\nomenclature[D,07]{$ \Delta_{i}^{p}$}{{excess} real power generated at generator $i \in \mcal{G}$}
\nomenclature[D,08]{$ p_{ij}$, $q_{ij}$}{real and reactive power flow on line $ e_{ij} \in {\mathcal{E}^a}$, as measured at node $i$}
\nomenclature[D,09]{$f_i^p$, $f_i^q$}{real and reactive power generated at bus $i \in \mathcal{N}^a$}
\nomenclature[D,10]{$l_i^{p+}$, $l_i^{q+}$}{real and reactive power shed at bus $i \in \mathcal{N}^a$}
\nomenclature[D,11]{$l_i^{p-}$, $l_i^{q-}$}{real and reactive power over-generated at bus $i \in \mathcal{N}^a$}
\nomenclature[F,01]{$\widetilde{\nu}^E$, $\widetilde{\nu}^N$}{GMD-induced geo-electric fields in Eastward and Northward, respectively}
\nomenclature[F,02]{$\widetilde{\nu}_e^d$}{induced voltage sources on line $e \in \mcal{E}^d$, as a function of $\widetilde{\nu}^E$ and $\widetilde{\nu}^N$}
\renewcommand\nomname{Nomenclature}
\printnomenclature[0.9in]

\section{Introduction}
Solar flares and coronal mass ejections form solar storms where charged particles escape from the sun, arrive at Earth, and cause geomagnetic disturbances (GMDs). GMDs lead to changes in the Earth’s magnetic field, which then create geo-electric fields. These low-frequency, geo-electric fields induce quasi-DC currents, also known as Geomagnetically-Induced Currents (GICs), in grounded sections of electric power systems \cite{albertson1973solar,albertso1974effects, albertson1993geomagnetic}. GICs are superimposed on the existing alternating currents (AC) and bias the AC in transformers. This bias leads to half-cycle saturation and magnetic flux loss in regions outside of the transformer core. The energy stored in the stray flux increases the reactive power consumption of transformers, which can lead to load shedding since, in general, generators are not designed to handle such unexpected losses. In addition, the stray flux also drives eddy currents that can cause excessive transformer heating. Excess heating leads to reduced transformer life and, potentially, immediate damage \cite{GICeffects2012}. As a result, when GMD events occur on large-scale electric power systems, the resulting power outages can be catastrophic. For example, the GMD event in Quebec in 1989 led to the shutdown of the Hydro-Quebec power system. As a consequence, six million people 
lost access to power for nine hours. By some estimates the net cost of this event was \$13.2 million, with damaged equipment accounting for \$6.5 million of the cost \cite{bolduc2002gic}.

The potential GMD impacts to transformers in the bulk electric power system have motivated the United States government to sponsor research to improve understanding of GMD events and identify strategies for mitigating the impacts of GMDs on power systems\cite{exec2016,FEDERALENERGYREGULATORYCOMMISSION2016}. To model the potential risks introduced by GICs, research institutes and  electric power industries have actively improved GIC modeling and GIC monitoring \cite{erinmez2002management,cannon2013extreme,qiu2015geomagnetic,overbye2013power, horton2012test}. For example, the North American Electric Reliability Corporation (NERC) developed a procedure to quantify GICs in a system based on the exposure to geo-electric field \cite{GIC2013flow}. These models are used to conduct risk analyses which estimate the sensitivity of reactive power losses from GICs. These studies indicate that risk mitigation warrants further study.

The recent literature mainly focuses on mitigating two risks introduced by GICs. The first is voltage sag caused by increased reactive power consumption in transformers \cite{GICeffects2012}. The second is transformer damage caused by excessive hot-spot thermal heating\cite{GICcapacity}. One mitigation is DC-current blocking devices that prevent the GIC from accessing transformer neutrals \cite{bolduc2005development}. Unfortunately, these devices are expensive; a single unit can cost \$500K \cite{liang2015optimal,zhu2015blocking,kovan2015mitigation}. The high cost is a barrier to wide-scale adoption of this technology. To address this issue, Lu \textit{et al.} \cite{lu2017optimal} developed a GIC-aware optimal AC power flow (ACOPF) model that uses existing topology control (line switching) and generator dispatch to mitigate the risks of GIC. This work showed that topology reconfiguration can effectively protect power systems from GIC impacts, which was also later observed in \cite{kazerooni2018transformer}. However, these papers assumed a deterministic GMD event (i.e., the induced geo-electric field is known). In reality, solar storms, such as solar flares and coronal mass ejections (CMEs), are difficult to predict. Although ground- and space-based sensors and imaging systems have been utilized by the National Aeronautics and Space Administration (NASA) to observe these activities at various depths in the solar atmosphere, the intensity of a storm cannot be measured until the released particles reach Earth and interact with its geomagnetic field \cite{NASA}. As a result, there is often uncertainty in predictions of solar storms and the induced geo-electric field, which introduce operational challenges for mitigating the potential risks by GIC to power systems. 



This paper extends the study of GIC mitigation of \cite{lu2017optimal} in two ways. The model (1) uses line switching and generator ramping to mitigate GIC impacts and (2) assumes uncertain magnitude and orientation of a GMD. We formulate this problem as a two-stage distributionally robust (DR) optimization model that we refer to as DR-OTSGMD. The first-stage problem  selects a set of transmission lines and generators to serve power in a power system ahead of an imminent GMD event. The second-stage problem evaluates the performance of the network given the status of transmission lines, the reserved electricity for daily power service, and a realized GMD. In this model, the uncertain magnitude and direction of a GMD are modeled with an ambiguity set of probability distributions of the induced geo-electric field. The objective is to minimize the expected total cost of the worst-case distribution defined in the ambiguity set. Instead of assuming a specific candidate distribution, the ambiguity set is described by statistical properties of uncertainty, such as the support and moment information. As a result, the DR optimization approach yields less conservative solutions than prevalent robust optimization approaches \cite{xiong2017distributionally, zhao2012robust}. In comparison to stochastic programming, the DR method does not require complete information of the exact probability distribution, i.e., the decisions do not rely on assumptions about unknown distributions \cite{bansal2017decomposition}. Additionally, the reliability of the solutions found with DR methods does not require a large number of random samples, which can lead to computational tractability and scalability issues for large-scale problems \cite{li2016distributionally}. 
In the literature, DR optimization approaches have been used to model a variety of problems \cite{scarf1957min, delage2010distributionally, fabozzi2010robust, jiang2016data, goh2010distributionally, dupavcova1987minimax}, such as contingency-constrained unit commitment \cite{zhao2018distributionally, chen2018distributionally, xiong2017distributionally}, optimal power flow with uncertain renewable energy generation \cite{zhang2017distributionally, lubin2016robust, xie2018distributionally, li2016distributionally}, planning and scheduling of power systems \cite{bian2015distributionally, wang2016distributionally}, and energy management \cite{qiu2015distributionally, wei2016distributionally}. These observations and the lack of studies about probability distributions for GMD events makes a DR model an attractive choice for modeling uncertainty in this problem.
Given the computational complexity (nonlinear physics and trilevel modeling) of this problem, we focus on developing methods for
computing DR operating points based on lower bounding techniques.
Techniques for computing upper bound solutions and the globally optimal solution are left for future work. The main contributions of this paper include: 

\begin{itemize}
    \item A two-stage DR-OTSGMD model that captures the uncertainty of the GMD-induced geo-electric field. This formulation considers the AC physics of power flow and reactive power consumption at transformers due to uncertain GICs.
    \item {\color{Black} A relaxation and reformulation of the two-stage DR-OTSGMD problem which facilitates the problem decomposition and application of efficient decomposition algorithms. We show that solutions of the resulting problem are a valid lower bound to the original problem. }
    \item Extensive numerical analysis using the Epri21 test system to validate the model and demonstrates its effectiveness in generating robust solutions.
\end{itemize}

The remainder of this paper is organized as follows. In section \ref{sec:formulation}, we discuss the two-stage DR-OTSGMD formulation and the ambiguity set of probability distributions for the induced geo-electric field. We describe linear relaxations of nonlinear and non-convex functions in the problem, then derive a decomposition framework for solving the relaxation. In Section \ref{sec:methodology}, we present a column-and-constraint generation algorithm and demonstrate the effectiveness of the approach using the Epri21 test system in Section \ref{Sec:case study}. Finally, we conclude and provide directions for future research in Section \ref{sec:conclusions}.

\section{Problem Formulation}
\label{ProblemFormulation}

Here, we describe the DR-OTSGMD. We first introduce the AC power flow model. We then introduce the modeling of the GIC. We note that the AC and GIC flows exist on the same physical power system, however, they require different representations of that system. The two representations are tied through defined interactions of the AC and GIC currents.
Third, we discuss the coupling between the AC power flow model and the GIC. Next, we discuss the uncertainty model. We then discuss our convex (linear) relaxation of the problem and reformulate this relaxation as a distributionally robust optimization problem. 

{
\color{Black}

\subsection{Electric Power Model--AC}

The objective function for operating the AC electric power systems minimizes total generation dispatch cost as formulated by a quadratic function \eqref{nl_obj}. In this cost function, $f^p_i$ denotes the real power generation from generator $i$. The fixed generation cost is incurred when a generator is switched on (i.e., $z^g_{i}=1$).
\begin{subequations}
\begin{align}
    &\label{nl_obj} \min \sum_{i \in \mcal{G}} z^g_{i}c^0_i + c^1_if^p_i + c^2_i(f^p_i)^2 
\end{align}
\end{subequations}

The  AC  physics  of  electric  power  systems  are  governed by  Kirchoff’s  and  Ohm’s  laws. Here, Kirchoff's law is modeled as constraints \eqref{nl_pbalance}-\eqref{nl_qbalance} and Ohms law is modeled as constraints \eqref{nl_pij}-\eqref{nl_qji}.
\begin{subequations}
\allowdisplaybreaks
\label{nonlinear_ac_kirchoff}
\begin{align}
& \label{nl_pbalance} \smashoperator{\sum_{e_{ij} \in \mcal{E}_i} } p_{ij} = \sum_{k\in \mathcal{G}_i} f^p_{k}-d^p_{i} - v_i^2g_i \qquad \forall i\in \mcal{N}^a \\
&\label{nl_qbalance} \smashoperator{\sum_{e_{ij}\in \mcal{E}_i}} q_{ij}
= \sum_{k\in\mathcal{G}_i}f^q_{k}-d^q_{i}  + v_i^2b_i \qquad \forall i \in \mcal{N}^a \\
& \label{nl_pij} p_{ij}=\frac{1}{\alpha_{ij}^2}g_{e}v_{i}^2-\frac{1}{\alpha_{ij}}v_{i}v_{j}\big(g_{e}\cos(\theta_{i}-\theta_{j}) +b_{e}\sin(\theta_{i}-\theta_{j})\big) \qquad \forall e_{ij} \in \mcal{E}^a \\
& \label{nl_qij}  q_{ij}=-\frac{1}{\alpha_{ij}^2}(b_{e}+\frac{b_{e}^c}{2})v_{i}^2 + \frac{1}{\alpha_{ij}}v_{i}v_{j}\big( b_{e}\cos(\theta_{i}-\theta_{j}) - g_{e}\sin(\theta_{i}-\theta_{j})\big) \qquad \forall e_{ij} \in \mcal{E}^a \\
& \label{nl_pji}  p_{ji}=g_{e}v_{j}^2-\frac{1}{\alpha_{ij}}v_{i}v_{j}\big(g_{e}\cos(\theta_{j}-\theta_{i}) +b_{e}\sin(\theta_{j}-\theta_{i})\big) \qquad \forall e_{ij} \in \mcal{E}^a \\
& \label{nl_qji}  q_{ji}=-(b_{e}+\frac{b_{e}^c}{2})v_{j}^2 + \frac{1}{\alpha_{ij}}v_{i}v_{j}\big(b_{e}\cos(\theta_{j}-\theta_{i})-g_{e}\sin(\theta_{j}-\theta_{i})\big) \qquad \forall e_{ij} \in \mcal{E}^a
\end{align}
\end{subequations}%

\noindent
Here, $p_{ij}$ and $q_{ij}$ are the real and reactive flow between buses $i$ and $j$, as measured at node $i$, respectively. Similarly, $f_j^p$ and $f_j^q$ are the real and reactive generation at generator $j$ and $d_i^p$ and $d_i^q$ are the real and reactive load at $i$. Finally, $v_i$ and $\theta_i$ are the voltage magnitude and phase angle at bus $i$, respectively.

The flow on lines is restricted by the physical limits of the grid and are modeled with constraints \eqref{vi}-\eqref{nl_capacity}. Constraints (\ref{vi}) limit the voltage magnitude at buses, while constraints  (\ref{thetaij_ub}) apply bounds on the phase angle difference between two buses. Constraints (\ref{nl_capacity}) model operational thermal limits of lines at both sides.
\begin{subequations}
\begin{align}
& \label{vi} \underline{v}_{i}\leq v_{i} \leq \overline{v}_{i} \qquad \forall i \in \mcal{N}^a \\
& \label{thetaij_ub} |\theta_{i}-\theta_{j}| \leq \overline{\theta} \qquad \forall e_{ij} \in \mcal{E}^a \\
& \label{nl_capacity} p_{ij}^2+q_{ij}^2 \leq s^2_{e}, \;\;\;p_{ji}^2+q_{ji}^2 \leq s^2_{e} \qquad \forall e_{ij}  \in \mcal{E}^a 
\end{align}
\end{subequations}

Generator outputs are limited by the commitment of that generator and capacity upper and lower bounds which are modeled as constraints \eqref{gp_za}-\eqref{zg}. 
Discrete variables $z_i^g$ indicate the generator’s on-off status and are incorporated into generator power limits.
\begin{subequations}
\begin{align}
& \label{gp_za}  z^g_{i}\underline{gp}_i \leq f_{i}^p \leq z^g_{i}\overline{gp}_i \qquad \forall i \in \mathcal{G} \\
& \label{gq_za}  z^g_{i}\underline{gq}_i \leq f_{i}^q \leq z^g_{i}\overline{gq}_i \qquad \forall i \in \mathcal{G} \\
& \label{zg} z^g_{i} \in \{0, 1\} \qquad \forall i \in \mcal{G}
\end{align}
\end{subequations}

For mitigating GIC effects, transmission lines are allowed to be switched on or off.  We model switching by modifying constraints \eqref{nl_pji}-\eqref{nl_qji} and \eqref{thetaij_ub}-\eqref{nl_capacity} with:
\begin{subequations}
\begin{align}
& \label{z_nl_pij} p_{ij}=z^a_{e}\Big(\frac{1}{\alpha_{ij}^2}g_{e}v_{i}^2-\frac{1}{\alpha_{ij}}v_{i}v_{j}\big(g_{e}\cos(\theta_{i}-\theta_{j}) +b_{e}\sin(\theta_{i}-\theta_{j})\big)\Big) \qquad \forall e_{ij} \in \mcal{E}^a \\
& \label{z_nl_qij}  q_{ij}=z^a_{e}\Big(-\frac{1}{\alpha_{ij}^2}(b_{e}+\frac{b_{e}^c}{2})v_{i}^2 + \frac{1}{\alpha_{ij}}v_{i}v_{j}\big( b_{e}\cos(\theta_{i}-\theta_{j}) - g_{e}\sin(\theta_{i}-\theta_{j})\big)\Big) \qquad \forall e_{ij} \in \mcal{E}^a  \\
& \label{z_nl_pji}  p_{ji}=z^a_{e}\Big(g_{e}v_{j}^2-\frac{1}{\alpha_{ij}}v_{i}v_{j}\big(g_{e}\cos(\theta_{j}-\theta_{i}) + b_{e}\sin(\theta_{j}-\theta_{i})\big)\Big) \qquad \forall e_{ij} \in \mcal{E}^a  \\
& \label{z_nl_qji}  q_{ji}=z^a_{e}\Big(-(b_{e}+\frac{b_{e}^c}{2})v_{j}^2 + \frac{1}{\alpha_{ij}}v_{i}v_{j}\big(b_{e}\cos(\theta_{j}-\theta_{i})-g_{e}\sin(\theta_{j}-\theta_{i})\big)\Big) \qquad \forall e_{ij} \in \mcal{E}^a \\
& \label{z_thetaij_ub} z^a_e|\theta_{i}-\theta_{j}| \leq \overline{\theta} \qquad \forall e_{ij} \in \mcal{E}^a \\
& \label{z_nl_capacity} p_{ij}^2+q_{ij}^2 \leq z^a_{e}s^2_{e}, \;\;\;p_{ji}^2+q_{ji}^2 \leq z^a_es^2_{e} \qquad \forall e_{ij}  \in \mcal{E}^a \\
& \label{z_a} z^a_e \in \{0,1\}\qquad \forall e \in \mcal{E}^a
\end{align}
\end{subequations}

\noindent where $z_e^a$ denotes the on-off status of transmission line $e$ in the AC model. Constraints \eqref{z_nl_pij}-\eqref{z_nl_qji} model AC power flow on each line when the line is closed and force the flow to zero when the line is open. Similarly, constraints \eqref{z_thetaij_ub} and \eqref{z_nl_capacity} place phase angle difference limits and thermal limits on active lines, respectively.

Given that generators can respond in a limited way to GIC, we model this response with ramping constraints \eqref{setpoint}-\eqref{ramp}. Here, $\rho_i$ is the setpoint of generator $i \in \mcal{G}$. Constraints \eqref{ramp} limit the deviation of real power generation from $\rho_i$ when generator $i$ is online.
\begin{subequations}
\begin{align}
&\label{setpoint} z^g_i\underline{gp}_i \leq \rho_i \leq z^g_i\overline{gp}_i \qquad i \in \mcal{G} \\
& \label{ramp}  |f_i^p - \rho_i| \leq z^g_i\overline{u}_i^{R}\overline{gp}_i  \qquad \forall i \in \mathcal{G}
\end{align}
\end{subequations}

\noindent
The objective function is then reformulated with the generator set points, i.e.,
\begin{equation}
    \label{nl_obj_setpoint} \min \sum_{i \in \mcal{G}} z^g_{i}c^0_i + c^1_i\rho_i + c^2_i(\rho_i)^2 
\end{equation}
\noindent
In addition, to ensure feasibility at all times, we introduce slack variables for over (i.e., $l^{p+}_i, l^{q+}_i$) and under (i.e., $l^{p-}_i, l^{q-}_i$) load consumption. This changes equations \eqref{nl_pbalance}-\eqref{nl_qbalance} to \eqref{nl_pbalance_slack}-\eqref{nl_qbalance_slack}.
\begin{subequations}
\begin{align}
& \label{nl_pbalance_slack} \smashoperator{\sum_{e_{ij} \in \mcal{E}_i} } p_{ij} = \sum_{k \in \mathcal{G}_i} f^p_{k}-d^p_{i} + l_i^{p+} - l_i^{p-} - v_i^2g_i \qquad \forall i\in \mcal{N}^a \\
&\label{nl_qbalance_slack} \smashoperator{\sum_{e_{ij}\in \mcal{E}_i}} q_{ij}
= \sum_{k \in \mathcal{G}_i} f^q_{k}-d^q_{i} + l_i^{q+} - l_i^{q-} + v_i^2b_i \qquad \forall i \in \mcal{N}^a 
\end{align}
\end{subequations}

\noindent Similarly, the objective function is also modified to penalize the slackness with high cost ($\kappa_i$), i.e.,
\begin{equation}
    \label{nl_obj_slack} \sum_{i \in \mcal{G}} z^g_{i}c^0_i + c^1_i\rho_i +  c^2_i(\rho_i)^2  + \sum_{i \in \mcal{N}^a} \kappa_i (l^{p+}_{i}+l^{q+}_{i} + l^{p-}_{i}+l^{q-}_{i}) 
\end{equation}

\noindent
The slack is interpreted as an indication for the need to shed loads or to over generate.










\subsection{Electric Power Model--GIC}
}

\subsubsection{$\widetilde{\nu}_e^d$ calculation}\label{subsubsection:geo-electric field calc}
The GIC calculation depends on induced voltage sources $(\widetilde{\nu}_e^d)$ on each power line, $e \in \mathcal{E}^d$, in the network which are determined by the geo-electric field integrated along the transmission line. This relationship is modeled in Eq.~(\ref{eq:dc_source})
\begin{equation}\label{eq:dc_source}
\widetilde{\nu}_e^d=\oint \vec{E}_e\cdot d\vec{l}_e,
\end{equation} where $\vec{E}_e$ is the geo-electric field in the area of transmission line $e \in \mcal{E}^d$, and $d\vec{l}_e$ is the incremental line segment length including direction \cite{GIC2013flow}. In practice, the actual geo-electric field varies with geographical locations. In this paper, we use a common assumption that the geo-electric field in the geographical area of a transmission line is uniformly distributed \cite{horton2012test,zhu2015blocking,GIC2013flow}, i.e., $\vec{E} = \vec{E}_e$ for any $e\in \mcal{E}^d$. Hence, only the coordinates of the line end points are relevant and $\vec{E}$ is resolved into its eastward (x axis) and northward (y axis) components \cite{GIC2013flow}. Given $\vec{L}_e$, the length of line $e$ with direction, equation (\ref{eq:dc_source}) is reformulated as:
\begin{equation}\label{eq:voltage source}
 \widetilde{\nu}_e^d = \vec{E}\cdot\vec{L}_e=\widetilde{\nu}^NL_e^N + \widetilde{\nu}^EL_e^E, \quad \forall e \in \mathcal{E}^d
\end{equation}
where $\widetilde{\nu}^N$ and $\widetilde{\nu}^E$ represent the geo-electric fields (V/km) in the northward and eastward directions, respectively. $L_e^N$ and $L_e^E$ denote {distances (km) along the northward and eastward directions}, respectively, which depend on the geographical location (i.e., latitude and longitude) \cite{GIC2013flow}.


\subsubsection{Transformer modeling} 
{\color{Black} 
{\color{black}The two most common transformers in electrical transmission systems are generator step-up (GSU) transformers and network transformers. GSUs connect the output
terminals of generators to the transmission network. In contrast, network transformers are generally located relatively far from generators and transform voltage between different sections of the transmission system.} In this section, we discuss how to model these two classes of transformers in the DC circuit induced by a GMD. For notation brevity, we use subscripts $h$ and $l$ to represents the high-voltage (HV) and low-voltage (LV) buses of all transformers.  Additionally, we define $N_x$ and $I^d_x$ as the number of turns and GICs in the transformer winding $x$, respectively, and let $\Theta(\cdot)$ denote a linear function of $I^d_x$. 

In this paper, we model two types of GSU transformers: (1) GWye-Delta and (2) Delta-GWye. Consistent with common engineering practice, we assume that each GSU is grounded on the HV side that connects the generator to the transmission network. Figure \ref{fig:GSU} shows the DC representation of the GSU transformer. It shows that the effective GICs (denoted as $\widetilde{I}^d$) depend on GICs in the HV primary winding, i.e.,
\begin{equation}
\label{GSU_eff}
   \widetilde{I}^d = \Big|\Theta(I^d_h) \Big| = \Big|I^d_h\Big|
\end{equation}

For network transformers, we model (1) two-winding transformers: GWye-GWye Anto- and Gwye-Gwye transformers and (2) three-winding transformers including Delta-Delta, Delta-Wye, Wye-Delta, and Wye-Wye configurations. The DC-equivalent circuits of GWye-GWye Anto- and Gwye-Gwye transformers are shown in Figures \ref{fig:auto} and \ref{fig:cov}, respectively, where subscripts $s$ and $c$ denote the series and common sides of the auto-transformer.}
For an auto-transformer, as shown in Figure \ref{fig:auto}, the effective GICs are determined by GIC flows in the series and common windings, i.e.,
%
\begin{equation}
\label{auto_eff}
    \widetilde{I}^d = \Big|\Theta(I^d_s) + \Theta(I^d_c)\Big| = \Big|\frac{(\alpha-1)I^d_s + I^d_c}{\alpha}\Big|
\end{equation}
\noindent where turns ratio $\alpha = \frac{N_s + N_c}{N_c}$. Similarly, the effective GICs for a Gwye-Gwye transformer (Figure \ref{fig:cov}) models GICs on the HV and LV sides, i.e.,
\begin{equation}
\label{gwye_eff}
    \widetilde{I}^d = \Big|\Theta(I^d_h) + \Theta(I^d_l)\Big| = \Big|\frac{\alpha I^d_h + I^d_l}{\alpha}\Big| 
\end{equation}
where transformer turns ratio $\alpha=\frac{N_h}{N_l}$. 

For all three-winding transformers we assume that their windings are ungrounded. Hence, their associated effective GICs are zero because ungrounded transformers do not provide a path for GIC flow \cite{GIC2013flow}.
\begin{figure}[!h]
   \centering
   \captionsetup{font=small}
   \subfigure[GWye-Delta/Delta-GWye GSUs]{
   \label{fig:GSU}
   \includegraphics[scale=1.46]{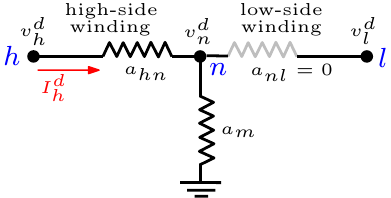}} 
   \subfigure[GWye-GWye Auto]{
   \label{fig:auto}
   \includegraphics[scale=1.46]{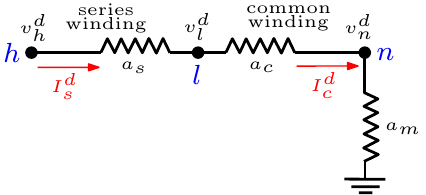}} 
   \subfigure[Gwye-Gwye]{
   \label{fig:cov}
   \includegraphics[scale=1.46]{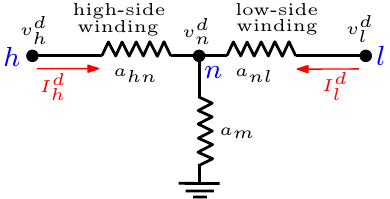}}       
   \caption[DC equivalent circuits for different types of transformers]{DC equivalent circuits for different types of transformers. $h$, $l$, and $n$ (blue font) represent high-side bus, low-side bus, and neutral bus, respectively.} 
   \label{Fig:xf_dc}
\end{figure}
{\color{Black}
\subsubsection{GIC Modeling}

The GMD-induced equivalent DC circuit is modeled through constraints \eqref{dc_balance}-\eqref{dc_ij}. These constraints calculate the GIC flow on each line in the DC represenation of the physical system. Equations \eqref{dc_balance} model Kirchoff's law and equations \eqref{dc_ij} model Ohm law, where $\wt{\nu}^d_e$ denotes the DC-voltage source of line $e$ induced by the geo-electric field (see Section 2.2.1). 
\begin{subequations}
\begin{align}
&\label{dc_balance} \sum_{e \in \mcal{E}_m}I_e^d = a_mv_m^d \qquad \forall m \in \mcal{N}^d \\
& \label{dc_ij} I_e^d = a_e(v^d_m - v^d_n + \wt{\nu}^d_e) \qquad \forall e_{mn} \in \mcal{E}^d
\end{align}
\end{subequations}

Constraints \eqref{Idmag}-\eqref{Idub} model the effective GIC value of each transformer, $\wt{I}^d_e$, which depends on the type of transformer $e \in \mcal{E}^\tau$. Instead of introducing additional discrete variables, constraints \eqref{Idmag} model and relax the \textit{absolute value} of $\smashoperator{\sum_{\wh{e}_{ij} \in \mcal{E}^w_{e}}}\Theta(I_{\wh{e}}^d)$  defined in (\ref{GSU_eff})--(\ref{gwye_eff}). 
This relaxation is often tight as larger DC current typically causes problems and drives the objective value up.
Constraint set (\ref{Idub}) denotes the maximum allowed value of GIC flowing through transformers. We assume this limit is twice the upper bound of the AC in the network.
\begin{subequations}
\label{eq:effective_GIC}
\begin{align}
& \qquad \quad \label{Idmag} \widetilde{I}_{{e}}^d \geq \smashoperator{\sum_{\wh{e}_{ij} \in \mcal{E}^w_{e}}}\Theta(I_{\wh{e}}^d), \quad \widetilde{I}_{{e}}^d \geq -\smashoperator{\sum_{\wh{e}_{ij} \in \mcal{E}^w_{e}}}\Theta(I_{\wh{e}}^d) \quad \forall e \in \mcal{E}^\tau \\
& \label{Idub} \qquad \quad 0 \leq \wt{I}_{e}^d\leq \max_{\forall \wh{e} \in \mcal{E}^a} 2\overline{I}_{\hat{e}}^a \qquad \forall e \in \mcal{E}^{\tau} 
\end{align}
\end{subequations}

Constraint set (\ref{qloss}) computes the reactive power load due to transformer saturation \cite{albertson1973solar,overbye2013power, zheng2014effects, zhu2015blocking} by using the effective GICs for each transformer type. Here, $d^{qloss}_i$ denotes the total reactive power loads at bus $i$ which is the high-voltage side of transformer $e \in \mcal{E}^{\tau}_i$.
\begin{subequations}
\begin{align}
& \label{qloss} \qquad \quad d_{i}^{qloss}=\smashoperator{ \sum_{e \in \mcal{E}_i^{\tau}}}k_{e}v_{i}\wt{I}^d_{e} \qquad \forall i \in \mcal{N}^a
\end{align}
\end{subequations}

Similar to the AC model, lines are switched on and off by modifying constraints \eqref{dc_ij} with on-off variables $z^d_e$ as follows. 
%
\begin{subequations}
\begin{align}
& \label{dc_ij_z} I_e^d = z_e^da_e(v^d_m - v^d_n + \wt{\nu}^d_e) \qquad \forall e_{mn} \in \mcal{E}^d \\
& z^d_e \in \{0,1\}\qquad \forall e \in \mcal{E}^d
\end{align}
\end{subequations}

\subsection{Electric Power Model--Coupled GIC and AC}

The GIC and AC are tied in two ways.  First, switching lines on and off impact both the AC and GIC formulation.  We use the AC switching variables (i.e., $z_e^a$ and $z_i^g$) and tie these variables to GIC switching by linking an edge $e \in \mcal{N}^d$ in the DC circuit to an edge in the AC circuit and dropping the $z_e^d$ variables. 
Specifically, for 
$e \in \mathcal{E}^d$, we use
$\overrightarrow{e}$ to denote the associated AC line of $e$.  This is a one-to-one mapping for transmission lines and a many-to-one mapping for transformers \cite{lu2017optimal}. 
This is a one-to-one mapping for transmission lines and a many-to-one mapping for transformers (i.e., both high and low-side windings of a transformer are linked to one and only one line in the AC circuit). Using this notation, constraints \eqref{dc_ij_z} are modified to constraints \eqref{dc_ij_zag}.
\begin{equation}
\label{dc_ij_zag}
 I_e^d = z^a_{\overrightarrow{e}}a_e(v^d_m - v^d_n + \wt{\nu}^d_e) \qquad \forall e_{mn} \in \mcal{E}^d 
\end{equation}

Second, the reactive losses induced by the GIC are added to the AC Kirchoff equations by modifying constraints \eqref{nl_qbalance_slack} to \eqref{nl_qbalance_slack_qloss}.
\begin{equation}
\label{nl_qbalance_slack_qloss} \smashoperator{\sum_{e_{ij}\in \mcal{E}_i}} q_{ij}
= \sum_{k \in \mathcal{G}_i} f^q_{k}-d^q_{i} - d^{qloss}_i + l_i^{q+} - l_i^{q-} - v_i^2b_i \qquad \forall i \in \mcal{N}^a 
\end{equation}

\subsection{A Two-stage OTSGMD Model}
\label{subsec:DRGMDOTS}


Assuming that the induced geo-electric field is constant (i.e., $\wt{\nu}^d_e$ are known parameters), the OTSGMD problem is deterministic and it is natural to formulate the problem with two stages. In the first stage, generator setpoints and the on/off status of generators and edges are determined prior to an imminent GMD event. The second stage determines operational decisions of the power system in response to the given event. The formulation of this two-stage OTSGMD model is presented with:
\begin{subequations}
\label{first}
\begin{align}
&\label{first_obj} \min \sum_{i \in \mcal{N}^g} z^g_ic^0_i +  c_i^1\rho_i + c_i^2(\rho_i)^2  + \mcal{H}(\bsy{z^a},\bsy{z^g}, \bsy{\rho}, \bsy{\widetilde{\nu}^d})\\
& \quad \textit{s.t.} \quad \eqref{zg}, \eqref{z_a}, \eqref{setpoint}
\end{align}
\end{subequations}
\vspace{-1.3cm}
\begin{subequations}
\label{second}
\begin{align}
& \text{Where} \label{second_obj}\quad \mcal{H}(\bsy{z^a}, \bsy{z^g}, \bsy{\rho}, \bsy{\widetilde{\nu}^d}) = \min \sum_{i \in \mcal{N}^g} c_i^{R2}(\Delta^p_i)^2 +  c_i^{R1}\Delta^p_i + \sum_{i \in \mcal{N}^a} \kappa_i (l^{p+}_{i}+l^{q+}_{i} + l^{p-}_{i}+l^{q-}_{i}) \\
& \qquad \qquad \qquad \qquad \textit{s.t.}\label{ramp_up}\qquad\quad  \Delta_i^p \geq 0, \ \ \Delta_i^p \geq f_i^p - \rho_i \qquad \forall i 
\in \mathcal{G}  \\
& \qquad \qquad \qquad \qquad \qquad \qquad \eqref{vi},
\eqref{gp_za}-\eqref{gq_za},
\eqref{z_nl_pij}-\eqref{z_nl_capacity}, \eqref{ramp},
\eqref{nl_pbalance_slack},
\eqref{eq:voltage source}, \nonumber \\ 
& \qquad \qquad \qquad \qquad \qquad \qquad \eqref{dc_balance}, \eqref{Idmag}-\eqref{qloss}, \eqref{dc_ij_zag}-\eqref{nl_qbalance_slack_qloss}
\end{align}
\end{subequations}

The first-stage problem \eqref{first} specifies a system topology and  real-power generation set points for daily electricity consumption. The objective function \eqref{first_obj} minimizes the total cost including the fixed cost for keeping generators on, the fuel cost for the settled generator outputs, and the operating cost $\mcal{H}(\bsy{z^a},\bsy{z^g}, \bsy{\rho}, \bsy{\widetilde{\nu}^d})$ of the second stage. The second-stage problem is formulated as \eqref{second} which minimizes $\mcal{H}(\bsy{z^a},\bsy{z^g}, \bsy{\rho}, \bsy{\widetilde{\nu}^d})$ as function of the fuel cost for the ramp-up generation and a penalty for over and under load. Constraints \eqref{ramp_up} model the ramp-up generation (i.e., positive deviation) of real power at each generator. If generator $i \in \mcal{G}$ ramps up, the minimum $\Delta^p_i$ is equal to $f_i^p-\rho_i$; 0 otherwise. Note that the cost function \eqref{second_obj} takes into account only the ramp-up power generation, thus there is no cost if generator $i \in \mcal{G}$ ramps down.

\subsection{A Two-stage DR-OTSGMD Model with Uncertainty}


In practice, solar storms are unpredictable, which in turn leads to uncertain geo-electric fields $\vec{E}$. Specifically, the northward and eastward geo-electric fields (V/km), $\wt{\nu}^N$ and $\wt{\nu}^E$, are random variables and affect the induced voltage sources on transmission lines $\nu^d_e$ as equation \eqref{eq:voltage source}. Hence, we replace the objective function \eqref{first_obj} with its expectation equivalent as follows:
\begin{equation}
\allowdisplaybreaks
     \label{master_full_obj} {\mcal{Q}}_o := \min \sum_{i \in \mcal{N}^g} z^g_ic^0_i +  c_i^1\rho_i + c_i^2(\rho_i)^2  + \sup_{\bb{P} \in \bb{Q}}   \bb{E_P}[\mcal{H}(\bsy{z^a},\bsy{z^g}, \bsy{\rho}, \bsy{\widetilde{\nu}^d})]
\end{equation}
Where $\sup_{\bb{P} \in \bb{Q}}   \bb{E_P}[\mcal{H}(\bsy{z^a},\bsy{z^g}, \bsy{\rho}, \bsy{\widetilde{\nu}^d})]$ is the worst-case expected operating cost incurred in the second stage over an ambiguity set $\mathbb{Q}$ of an unknown probability measure $\mathbb{P}$ of random variables $\wt{\nu}^N$ and $\wt{\nu}^E$, which we discuss in detail in Section \ref{sec:Model of Uncertainty}. 
}

{\color{Black}
\subsection{Convex Relaxations}
{\label{OA}}


The two-stage DR-OTSGMD model is a very hard mixed-integer non-convex problem because of constraints \eqref{z_nl_pij}-\eqref{z_nl_qji}, \eqref{nl_pbalance_slack}, \eqref{nl_qbalance_slack}, \eqref{qloss}, and \eqref{dc_ij_zag}-\eqref{nl_qbalance_slack_qloss}. One of the focuses of this paper is 
the derivation of lower bounds to DR-OTSGMD, so we relax each of these constraints. For notation brevity, in the following sections, $L_x$ denotes a set of uniformly located points within the bounds of the variable $x$.

\paragraph{Quadratic Fuel Cost Function.} In this paper, the fuel cost is formulated as a quadratic function of the real power generation from generator $i \in \mcal{G}$ (i.e., $c^1_i\rho_i + c^2_i(\rho_i)^2 $). By applying the perspective reformulation described in \cite{gunluk2010perspective, ostrowski2012tight,frangioni2008tighter}, the quadratic term $c^2_i(\rho_i)^2$ is reformulated as:
%
\begin{equation}
\allowdisplaybreaks
\label{perspective_cut}
     z^g_i\widecheck{\rho}_i \geq \rho_i^2 \qquad \forall i \in \mcal{G}
\end{equation}%

\noindent 
Constraints (\ref{perspective_cut}) guarantees that the minimum (optimal) value of $\widecheck{\rho}_i$ is zero if generator $i$ is switched off (i.e., $z_i^g = 0$). Otherwise $\widecheck{\rho}_i$ is equal to $\rho_i^2$ when $z_i^g = 1$ because this is a minimization problem. Next, we replace constraints (\ref{perspective_cut}) with a set of piecewise linear constraints which outer approximate (relax) $\widecheck{\rho}_i$ as follows:
\begin{equation}
    \label{perspective_cut_l} \widecheck{\rho}_i \geq 2\rho_i{\ell} - z_i^g(\ell)^2 \qquad \forall i \in \mcal{N}^g, \ell \in L_{\rho_i} 
\end{equation}
Constraints \eqref{perspective_cut_l} describe a set of linear inequalities that are tangent to the quadratic curve $\rho^2_i$ at points $l \in L_{\rho_i}$. As a result, the objective function is modified as follows.
\begin{equation}
    \label{master_full_obj_l} \min \sum_{i \in \mcal{N}^g} z^g_ic^0_i +   c_i^1\rho_i + c_i^2\widecheck{\rho}_i  + \sup_{\bb{P} \in \bb{Q}} \bb{E_P}[\mcal{H}(\bsy{z^a},\bsy{z^g}, \bsy{\rho}, \bsy{\widetilde{\nu}^d})] 
\end{equation}
Similarly, we use the same method to relax the quadratic term $\Delta_i^2$ and replace equations \eqref{second} with equations \eqref{second_obj_L}.
\begin{subequations}
\label{second_obj_L}
\begin{align}
    & \label{second_obj_l} \mcal{H}(\bsy{z^a}, \bsy{z^g}, \bsy{\rho}, \bsy{\widetilde{\nu}^d}) = \min \sum_{i \in \mcal{G}} c_i^{R2}\widecheck{\Delta}_i +  c_i^{R1}\Delta_i + \sum_{i \in \mcal{N}^a} \kappa_i (l^{p+}_{i}+l^{q+}_{i} + l^{p-}_{i}+l^{q-}_{i}) \\
    &\label{wc_delta_l} \widecheck{\Delta}_i \geq 2{\ell}(\Delta_i) - z_i^g({\ell})^2 \qquad \forall i \in \mcal{N}^g, \ell \in L_{\Delta_i}
\end{align}
\end{subequations}

\paragraph{AC Power Flow Physics.} In constraints \eqref{z_nl_pij}-\eqref{z_nl_qji}, \eqref{nl_pbalance_slack} and \eqref{nl_qbalance_slack_qloss}, the nonlinearities are expressed with the following terms \cite{hijazi2017convex}:
\begin{subequations}
\small
\allowdisplaybreaks
\begin{align}
    &\nonumber (1)\ w_{i} := v_i^2, \ \ (2) \  w^z_{ie} := z_ev_i^2 \ (w^z_{je} := z_ev_j^2), \ \ (3) \  w_e^c := z_ev_iv_j\cos(\theta_i-\theta_j), \ \ (4) \  w_e^s :=z_ev_iv_j\sin(\theta_i-\theta_j)
\end{align}
\end{subequations}%
The auxiliary variables $w_i$, $w^z_{ie}$, $w^z_{je}$, $w_e^c$ and $w_e^s$ are introduced to relax each of these quantities. First, the quadratic term $\ w_i := v_i^2$, is convexified by applying the second-order conic (SOC) relaxation defined in equation \eqref{eq:vi_soc}.
\begin{subequations}
\label{eq:vi_soc}
\begin{align}
& \label{vi_soc_l} w_i \geq v^2_i \qquad \forall i \in \mathcal{N}^a \\
& \label{vi_soc_u} w_i \leq (\overline{v}_i + \underline{v}_i)v_i - \overline{v}_i\underline{v}_i \qquad \forall i \in \mathcal{N}^a 
\end{align}
\end{subequations}
\noindent Further, we outer approximate (relax) the convex envelope of $v^2_i$ using a set of piecewise linear constraints which replace constraints \eqref{vi_soc_l} with constraints \eqref{vi_soc_l_oa}:
\begin{equation}
\label{vi_soc_l_oa} w_i \geq 2{\ell}v_i - ({\ell})^2 \qquad \forall i \in \mcal{N}^a, \ \ell \in L_{v_i}
\end{equation}

Next, to relax non-convex constraints $w^z_{ie}:= z_ev_i^2$ and $w^z_{je}:= z_ev_j^2$, we introduce the following notation: Given any two variables $x_i$, $x_j \in \mathbb{R}$, the McCormick (MC) relaxation \cite{mccormick1976computability} is used to relax a bilinear product $x_i\cdot x_j$ by introducing an auxiliary variable ${x}_{ij} \in {\langle x_i, x_j \rangle}^{MC}$. The feasible region of ${x}_{ij}$ is given by:
\vspace{-0.5cm}
\begin{subequations} \label{eq:SMC}
\allowdisplaybreaks
\begin{align}
& \label{McCormick}{x}_{ij} \geq \underline{x}_ix_j+\underline{x}_jx_i -\underline{x}_i \hspace{2pt} \underline{x}_j \\ 
&{x}_{ij} \geq \overline{x}_ix_j+\overline{x}_jx_i - \overline{x}_i \hspace{2pt} \overline{x}_j \\ 
&{x}_{ij} \leq \underline{x}_ix_j+\overline{x}_jx_i-\underline{x}_i \hspace{2pt} \overline{x}_j \\ 
&{x}_{ij} \leq \overline{x}_ix_j+\underline{x}_jx_i-\overline{x}_i \hspace{2pt} \underline{x}_j \\
&\underline{x}_i \leq x_i \leq \overline{x}_i, \ \underline{x}_j \leq x_j \leq \overline{x}_j
\end{align}
\end{subequations}%
Applying the relaxations in \eqref{eq:vi_soc} to $v_i^2$ and $v_j^2$, results in auxiliary variables, $w_i$ and $w_j$, respectively. Thus, the constraints $w^z_{ie}:= z_ev_i^2$ and $w^z_{je}:= z_ev_j^2$ can be expressed as $w^z_{ie}:= \langle z_e, w_i \rangle^{MC}$ and $w^z_{je}:= \langle z_e, w_j \rangle^{MC}$, respectively. It is also important to notice that the MC relaxation is exact when one variable in the product is binary \cite{nagarajan2016tightening}.

Finally, to deal with nonlinear terms $w_e^c := z_ev_iv_j\cos(\theta_i-\theta_j)$ and $w_e^s :=z_ev_iv_j\sin(\theta_i-\theta_j)$, though there are numerous tight convex relaxations in the literature \cite{molzahn2019survey,coffrin2016qc,lu2018tight,Narimani2018,sundar2018optimization}, we apply state-of-the-art SOC relaxations of \cite{kocuk2017new} considering it's computational effectiveness, as formulated in constraints \eqref{eq:we_soc}. 
\begin{subequations}
\label{eq:we_soc}
\begin{align}
& \label{we_z} z_e\underline{w}^c_e \leq w^c_e \leq z_e\overline{w}^c_e, \ z_e\underline{w}^s_e \leq w^s_e \leq z_e\overline{w}^s_e \qquad  \forall e_{ij}  \in \mcal{E}^a \\
&\label{we_link_1} \tan(\underline{\theta})w^c_e \leq w^s_e \leq \tan(\overline{\theta})w^c_e \qquad \forall e_{ij} \in \mathcal{E}^a \\
&\label{we_link_2} (w^c_e)^2 + (w^s_e)^2 \leq w^z_{ie}w^z_{je} \qquad \forall e_{ij} \in \mathcal{E}^a
\end{align}
\end{subequations}
\noindent Note that constraint set \eqref{we_link_1} is equivalent to the phase angle limit constraints \eqref{thetaij_ub}. Constraint set \eqref{we_link_2} is the SOC relaxations of equation $z^a_e\big((w^c_e)^2 + (w^s_e)^2 - w_iw_j\big) = 0$ which is obtained by linking $w^c_e$ and $w^s_e$ through the trigonometric identity $\cos^2(\theta_i-\theta_j) + \sin^2(\theta_i-\theta_j) = 1$. Further, we use a set of piecewise linear constraints to outer approximate (relax) the rotated SOC constraints \eqref{we_link_2} such that:
\begin{eqnarray}
\allowdisplaybreaks
\label{we_link_l}
     &\label{we_link_l_OA} 2\Big(w_e^c{\ell^c} + w_e^s{\ell^s} \Big) - w^z_{ie}\ell^{t} - w^z_{je}\ell^{f} \leq z_e\big(({\ell^c})^2 + ({\ell^s})^2  - \ell^{f}\ell^{t}\big) \quad \forall e_{ij} \in \mcal{E}^a \nonumber \\
    & \hspace{2.3cm}  \ell^c \in L_{{w^c_e}},
    \ell^s \in L_{w^s_e}, \ell^f \in L_{w_i}, \ell^t \in L_{w_j}
\end{eqnarray}
%

Based on these relaxations, we replace the non-convex
constraints in \eqref{z_nl_pij}-\eqref{z_nl_qji}, \eqref{nl_pbalance_slack} and \eqref{nl_qbalance_slack_qloss} with constraints \eqref{eq:ac_soc}. Constraints (\ref{pij_l})--(\ref{qji_l}) force line flows to be zero when the line is switched-off and take the associated values otherwise.
%
\begin{subequations}
\allowdisplaybreaks
\label{eq:ac_soc}
\begin{align}
&\label{pbalance_l} \smashoperator{\sum_{e_{ij} \in \mcal{E}_i^+} } p_{ij} +
\smashoperator{\sum_{e_{ji} \in \mcal{E}_i^-} } p_{ij}
= \sum_{k \in \mathcal{G}_i}f^p_{k}+l^{p+}_{i}-l^{p-}_{i}-d^p_{i} - w_ig_i \qquad \forall i\in \mcal{N}^a\\
&\label{pbalance_l} \smashoperator{\sum_{e_{ij}\in \mcal{E}^+_i}} q_{ij} +\smashoperator{\sum_{e_{ji}\in \mcal{E}^-_i}} q_{ij} 
= \sum_{k \in \mathcal{G}_i}f^q_{k}+l_i^{q+}-l^{q-}_{i}-d^q_{i} - d_{i}^{qloss} + w_ib_i \qquad \forall i \in \mcal{N}^a\\
& \label{pij_l} p_{ij}=g_{e}w^z_{ie}-g_{e}w^c_e - b_{e}w^s_e \qquad \forall e_{ij} \in \mcal{E}^a \\
& \label{qij_l}  q_{ij}=-(b_{e}+\frac{b_{e}^c}{2})w^z_{ie} + b_{e}w^c_e - g_{e}w^s_e \qquad \forall e_{ij} \in \mcal{E}^a\\
& \label{pji_l}  p_{ji}=g_{e}w_{je}^z-g_{e}w^c_e + b_{e}w^s_e \qquad \forall e_{ij} \in \mcal{E}^a \\
& \label{qji_l} q_{ji}=-(b_{e}+\frac{b_{e}^c}{2})w_{je}^z + b_{e}w^c_e + g_{e}w^s_e \qquad \forall e_{ij} \in \mcal{E}^a\\ 
& \label{zw_mc}w_{ie}^z := \langle z_e, w_i \rangle^{MC}, \ \ w_{je}^z := \langle z_e, w_j \rangle^{MC} \qquad \forall e_{ij} \in \mcal{E}^a \\
& \eqref{vi_soc_u}, \eqref{vi_soc_l_oa}, \eqref{we_z}-\eqref{we_link_1}, \eqref{we_link_l_OA}
\end{align}
\end{subequations}%
\paragraph{Line Thermal Limits.} Similar to constraints \eqref{vi_soc_l_oa}, we replace constraints \eqref{z_nl_capacity} with \eqref{eq:thermal_l} which outer approximate (relax) quadratic terms $p^2_{ij}+q^2_{ij}$ and $p^2_{ji}+q^2_{ji}$.
\begin{subequations}
\allowdisplaybreaks
\label{eq:thermal_l}
\begin{align}
    & 2p_{ij}{\ell}^p + 2q_{ij}{\ell^q} \leq z^a_e\left(s^2_e + ({\ell}^p)^2 + ({\ell}^q)^2\right) \qquad \forall e_{ij} \in \mcal{E}^a, \ \ell^p \in L_{p_e}, \ \ell^q \in L_{q_e} \\
    & 2p_{ji}{\ell^p}+ 2q_{ji}{\ell^q} \leq z^a_e\left(s^2_e + ({\ell}^p)^2 + ({\ell}^q)^2\right) \qquad \forall e_{ij} \in \mcal{E}^a, \ \ell^p \in L_{p_e}, \ \ell^q \in L_{q_e} 
\end{align}
\end{subequations}%

\paragraph{GIC-Associated Effects.} In the second stage, given geo-electric fields induced by a realized GMD, all nonlinearities and non-convexities are in the bilinear form as: (1) $z^a_{\overrightarrow{e}}(v_m^d-v_n^d)$ , 
and (2) $v_iI_e^d$. Using MC relaxations as described in \eqref{eq:SMC}, we replace constraints \eqref{qloss} with \eqref{qloss_l}-\eqref{mc_vi} and replace constraints \eqref{dc_ij_zag} with \eqref{dc_ij_l}-\eqref{mc_zgv}. 
\begin{subequations}
\label{eq:dc_l}
\begin{align}
& \label{qloss_l} d_{i}^{qloss}=\smashoperator{ \sum_{e \in \mcal{E}_i^{\tau}}}k_{e}u_{ie}^d \qquad \forall i \in \mcal{N}^a\\
&\label{mc_vi} u_{ie}^{d} \in {\langle v_i, \ \wh{I}^d_e \rangle}^{MC} \qquad \forall i \in \mcal{N}^a, \ e \in \mcal{E}_i^{\tau} \\
& \label{dc_ij_l} I_e^d = a_ev_{mn}^{zd} + z^a_{\overrightarrow{e}}a_e\wt{\nu}^d_e \qquad \forall e_{mn} \in \mcal{E}^d \\
&\label{mc_zgv} v_{mn}^{zd} \in {\langle z^g_{\overrightarrow{i}}, \ v_m^d-v_n^d \rangle}^{MC} \qquad \forall e_{mn} \in \mcal{E}^d 
\end{align}
\end{subequations}%
where $v_{mn}^{zd}$ and $u^d_{ie}$ are introduced as $v_{mn}^{zd} := z^a_{\overrightarrow{e}}(v^d_m-v^d_n)$ and $u^d_{ie} := v_iI^d_e$.
}
{
\color{red}

\label{sec:formulation}

}

\subsection{Model of Uncertainty} 
\label{sec:Model of Uncertainty}
This section focuses on constructing an ambiguity set (denoted by $\mathbb{Q}$) of probability distributions for the random geo-electric field induced by a GMD event. The notation $\bsy{\widetilde{\omega}}$ 
is used to denote the
vector of all random variables (i.e., $\bsy{\widetilde{\omega}} = [\widetilde{\nu}^E, \widetilde{\nu}^N]^T$) and the notation $\bsy{\mu} = [\mu^E, \mu^N]^T$ is used to denote the mean vector of the eastward  and  northward geo-electric fields. The ambiguity set $\mathbb{Q}$ is then formulated with: 

\begin{equation}
\label{ambi_1}
  \bb{Q}=\begin{cases}
    & \widetilde{\bsy{\omega}} \in \bb{R}^2\\
    \bb{P} \in \mcal{P}_0(\bb{R}):& \bb{E}_{\bb{P}}(\widetilde{\bsy{\omega}}) = {\bsy{\mu}}\\
    & \bb{P}\Big(\widetilde{\bsy{\omega}} \in \Omega\Big) = 1
  \end{cases}
\end{equation}%

\noindent where $\mcal{P}_0(\bb{R})$ denotes the set of all probability distributions on $\bb{R}$ and $\bb{P}$ denotes a probability measure in $\mcal{P}_0$. It is important to note that random variables $\bsy{\widetilde{\omega}}$  are not associated with any specific probability distribution. The second line of equation \eqref{ambi_1} forces the expectation of $\widetilde{\bsy{\omega}}$ to be $\bsy{\mu}$. The third line defines the support of the random variables, $\Omega$, and contains all the possible outcomes of $\bsy{\widetilde{\omega}}$. We formulate this support set based on valid bounds of geo-electric fields in eastward and northward directions, such that:
%

\vspace{-0.5cm}
\begin{equation}
\allowdisplaybreaks
\label{eq:support}
     \Omega := \Big\{ (\widetilde{\nu}^N, \widetilde{\nu}^E) \in \bb{R}:  0 \leq \widetilde{\nu}^N \leq \overline{\nu}^M, \ -\overline{\nu}^M \leq \widetilde{\nu}^E \leq \overline{\nu}^M, \ (\widetilde{\nu}^N)^2 + (\widetilde{\nu}^E)^2 \leq
    (\overline{\nu}^M)^2 
    \Big\}
\end{equation}%
\noindent As introduced in Section \ref{subsubsection:geo-electric field calc}, the geo-electric field $\overrightarrow{E}$ is formulated as a vector and represented by its two components $\widetilde{\nu}^E$ and $\widetilde{\nu}^N$. Here, we assume the direction of $\overrightarrow{E}$ is orientated from $0\degree$ to $180\degree$. The first two constraints in $\Omega$ define individual bounds of $\widetilde{\nu}^N$ and $\widetilde{\nu}^E$, respectively. {\color{Black}Given a prespecified upper bound on the geo-electric field amplitude (denoted by $\overline{\nu}^M$), the third inequality further limits the feasible values of $\widetilde{\nu}^N$ and $\widetilde{\nu}^E$ which is formulated as a quadratic constraint and inferred by the geometric representation of $\overrightarrow{E}$. To better demonstrate set $\Omega$, figure \ref{Fig:support} gives two examples that illustrate feasible domains of $\widetilde{\nu}^E$ and $\widetilde{\nu}^N$, where vector $\vec{E}$ represents the geo-electric field in the area of a transmission system. In the figure, the half circle represents the quadratic boundary of $\vec{E}$ (i.e., $(\widetilde{\nu}^E)^2 + (\widetilde{\nu}^N)^2 \leq (\overline{\nu}^M)^2$)}

\begin{figure}[h]
\captionsetup{font=small}
  \centering
  \subfigure[$\widetilde{\nu}^E < 0$, $|\vec{E}| = \overline{\nu}^M$]{
  \includegraphics[scale=0.47]{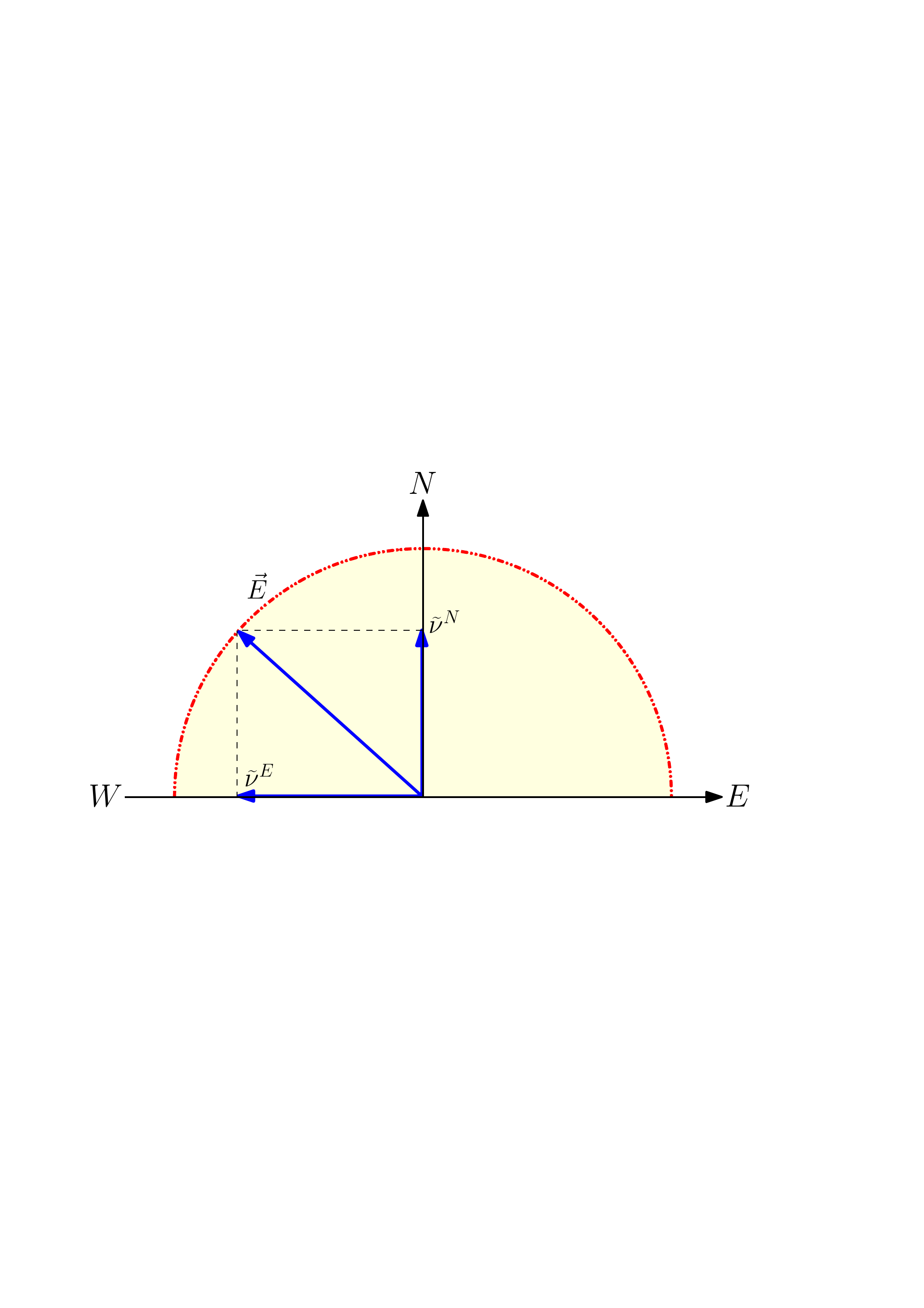}
  \label{fig:vd_WN}
  }
  \subfigure[$\widetilde{\nu}^E > 0$, $|\vec{E}| < \overline{\nu}^M$]{
  \includegraphics[scale=0.47]{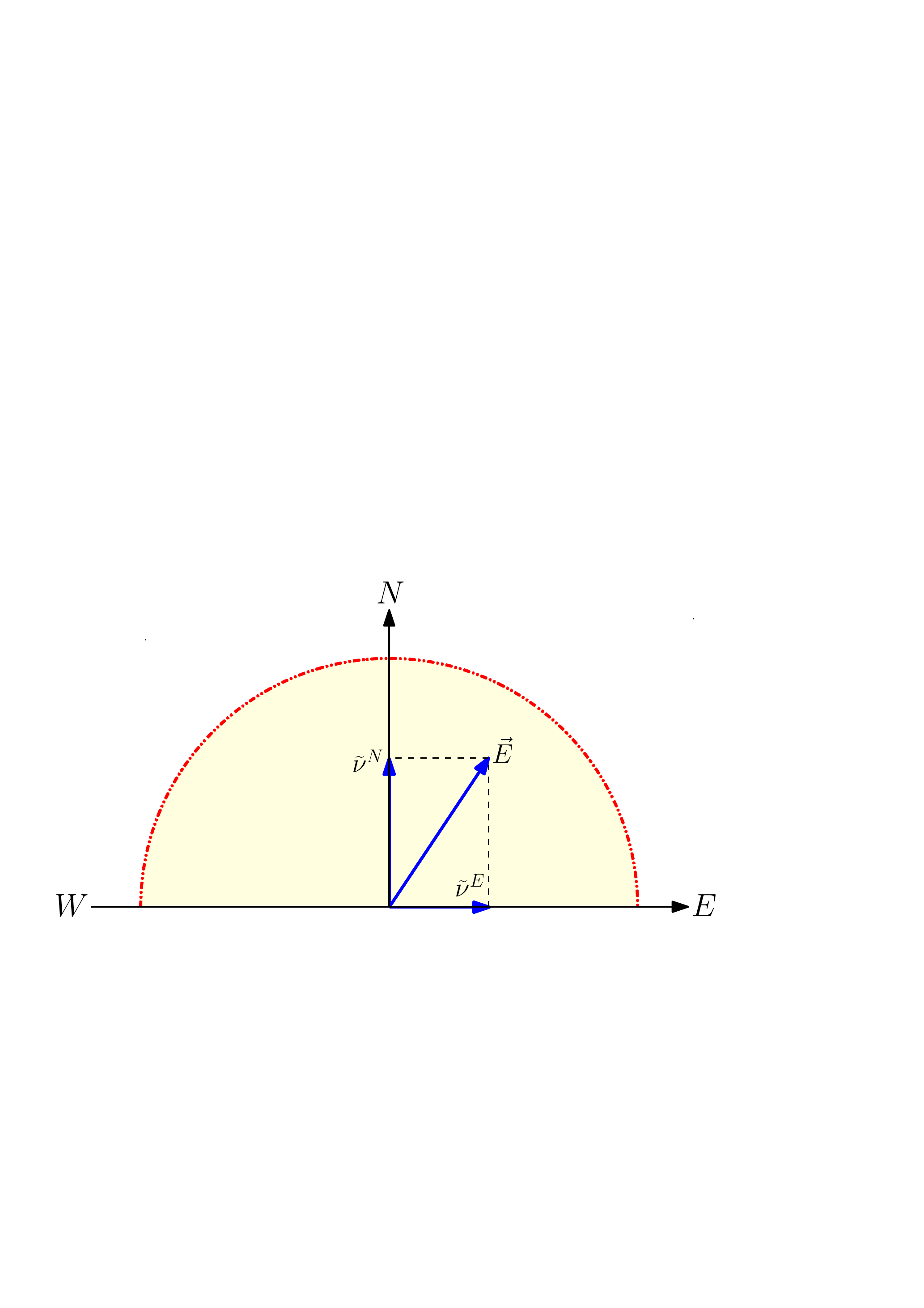}
  \label{fig:vd_EN}
  } 
\caption[Examples of the feasible regions of northward and eastward geo-electric fields]{Examples of feasible regions of $\widetilde{\nu}^E$ and $\widetilde{\nu}^N$. The geo-electric field $\vec{E}$ is formed by $\widetilde{\nu}^E$ and $\widetilde{\nu}^N$. In Fig.  \ref{fig:vd_WN}, the magnitude of $\vec{E}$ equals $\overline{\nu}^M$ and angle relative to east is larger than $\frac{\pi}{2}$. In Fig. \ref{fig:vd_EN}, the magnitude of $\vec{E}$ is less than $\overline{\nu}^M$ and the angle is smaller than $\frac{\pi}{2}$.} 
\label{Fig:support}
\end{figure}

\subsection{Reformulation of the Two-Stage DR-OTSGMD}

Using the relaxation methods described in Section \ref{OA}, the subproblem is linear and is a \textit{lower bound} of the original objective. Adopting the notation of \cite{zhao2018distributionally}, we present the linearized program in a succinct form as follows:
\begin{subequations}
\allowdisplaybreaks
\label{eq:suc}
\begin{align}
    &\label{suc_objective} \min_{\bsy{y}} \bsy{a}^T\bsy{y} + \sup_{\bb{P} \in \bb{Q}}\bb{E}_{\bb{P}}[\mcal{H}(\bsy{y}, \widetilde{\bsy{\omega}})] \\
    &\label{suc_0} \qquad \quad \bsy{Ay} \geq \bsy{b}  \\
    & \text{where} \nonumber  \\
    &\qquad \quad \mcal{H}(\bsy{y}, \widetilde{\bsy{\omega}}) = \min_{\bsy{x}} \bsy{c}^T\bsy{x} \\
    &\qquad \quad \label{suc_1} \quad \textit{s.t.} \ \
    \bsy{Gy} + \bsy{Ex} \geq \bsy{h} \\
    & \qquad\qquad\qquad \label{suc_2}\bsy{T(\widetilde{\omega})}\bsy{y} = \bsy{Wx}
\end{align}
\end{subequations}%
{\color{Black}
\noindent where $\bsy{y}$ denotes the first-stage variables and $\bsy{x}$ denotes the second-stage recourse variables that depend on random variables $\bsy{\widetilde{\omega}} = [\widetilde{\nu}^E, \widetilde{\nu}^N]^T$. Constraint set (\ref{suc_0}) represents constraints \eqref{setpoint} and \eqref{perspective_cut_l} in standard form. Constraint set (\ref{suc_1}) represents constraints \eqref{vi}, \eqref{gp_za}-\eqref{gq_za}, \eqref{ramp}, \eqref{dc_balance}, \eqref{eq:effective_GIC}, \eqref{ramp_up}, \eqref{eq:ac_soc}-\eqref{eq:dc_l}. Constraint set (\ref{suc_2}) represents constraints \eqref{eq:voltage source} and \eqref{dc_ij_l} that are 
the equations that include
random variables in the problem. Here, $\bsy{T(\widetilde{\omega})}$ is an affine function of $\bsy{\widetilde{\omega}}$ such that $\bsy{T(\widetilde{\omega})} = \sum^{|\bsy{\widetilde{\omega}}|}_{i=1}\bsy{T}^i\widetilde{\omega}_i$.} In the rest of this section, we derive a reformulation of the two-stage DR-OTSGMD problem which can be solved by applying a decomposition framework.
{\color{Black}
\subsubsection{Handling the worst-case expected cost}
According to Zhao and Jiang \cite{zhao2018distributionally}, an equivalent problem to \eqref{eq:suc} rewrites the worst-case expected cost $\sup_{\bb{P}\in \bb{Q}}\bb{E}_{\bb{P}}[\mcal{H}(\bsy{y}, \widetilde{\bsy{\omega}})]$ in the objective function (\ref{suc_objective}) as problem (\ref{eq:Q1}).}
%
\begin{subequations}
\allowdisplaybreaks
\label{eq:Q1}
\begin{align}
    & \sup_{\bb{P} \in \bb{Q}} \bb{E}_{\bb{P}}[\mcal{H}(\bsy{y}, \widetilde{\bsy{\omega}})] = \max_{\bb{P}}\int_{\Omega} \mcal{H}(\bsy{y}, \widetilde{\bsy{\omega}})\bb{P}(d\widetilde{\bsy{\omega}}) \\
    & \label{Q1_mean} \quad \textit{s.t.} \ \ \int_{\Omega}\widetilde{\bsy{\omega}} \ \bb{P}(d\widetilde{\bsy{\omega}}) = \bsy{\mu} \\
    & \label{Q1_support} \qquad \quad \int_{\Omega}\bb{P}(d\widetilde{\bsy{\omega}}) = 1
\end{align}
\end{subequations}

\noindent {\color{Black} Here, constraints (\ref{Q1_mean})--(\ref{Q1_support}) precisely describe the ambiguity set defined in (\ref{ambi_1}). Constraint (\ref{Q1_mean}) defines the mean value of random variables $\widetilde{\bsy{\omega}}$. Equation \eqref{Q1_support} formulates the support set $\Omega$ which contains all the possible outcomes of $\widetilde{\bsy{\omega}}$. Then, based on duality theory \cite{bertsimas1997introduction}, the problem is equivalently reformulated as \eqref{eq:rf_0} by incorporating the dual problem of \eqref{eq:Q1} in formulation \eqref{eq:suc}.}
%
%
\begin{subequations}
\allowdisplaybreaks
\label{eq:rf_0}
\begin{align}
    &\label{master_obj} {\mcal{Q}}_d := \min_{\bsy{y}, \bsy{\lambda}, \eta} \bsy{a}^T\bsy{y} + \bsy{\mu}^T{\bsy{\lambda}} + \eta \\
    &\quad \textit{s.t.} \ \ \eqref{suc_0} \nonumber \\
    &\qquad \quad \label{optimality_cut} \eta \geq \mcal{H}(\bsy{y}, \widetilde{\bsy{\omega}}) - {\bsy{\lambda}^T}\widetilde{\bsy{\omega}} \qquad \forall \widetilde{\bsy{\omega}} \in \Omega \\
    & \text{where} \nonumber \\
    &\label{sub}\qquad \quad \mcal{H}(\bsy{y}, \bsy{\widetilde{\omega}}) = \min_{\bsy{x}}\big\{\bsy{c}^T\bsy{x}: \bsy{Gy} + \bsy{Ex} \geq \bsy{h}, \  \bsy{T(\widetilde{\omega})y} = \bsy{Wx}\big\}
\end{align}
\end{subequations}%

\noindent{\color{Black} where $\bsy{\lambda}$ and $\eta$ are dual variables associated with constraints ($\ref{Q1_mean}$) and ($\ref{Q1_support}$), respectively. In addition, since constraints (\ref{optimality_cut}) must be satisfied for all realizations of $\bsy{\widetilde{\omega}}$ over support $\Omega$,  the equivalent formulation of constraints (\ref{optimality_cut}) is expressed as:} 
%
\begin{equation}
     \label{eq:inner_max}\eta \geq \max_{\forall \widetilde{\bsy{\omega}} \in \Omega}\big\{ \mcal{H}(\bsy{y}, \widetilde{\bsy{\omega}}) - {\bsy{\lambda}^T}\widetilde{\bsy{\omega}} \big\} 
\end{equation}
\noindent 
Further, by applying standard duality theory, the right-hand side of constraint (\ref{eq:inner_max}) is rewritten as the inner maximization problem:
\begin{subequations}
\allowdisplaybreaks
\label{eq:dual_sp}
\begin{align}
    &\label{inner_max} \max_{\forall \widetilde{\bsy{\omega}} \in \Omega}\Big\{ \mcal{H}(\bsy{y}, \widetilde{\bsy{\omega}}) - {\bsy{\lambda}^T}\widetilde{\bsy{\omega}} \Big\} \ \ = \max_{\forall (\bsy{\gamma}, \bsy{\pi}) \in \bsy{\Gamma}, \bsy{\widetilde{\omega}} \in \Omega}\Big\{ (\bsy{h}-\bsy{Gy})^T\bsy{\gamma} + \bsy{y}^T\big(\sum^{|\bsy{\widetilde{\omega}}|}_{i=1}(\bsy{T}^i)^T\bsy{\pi}\widetilde{\omega}_i\big) - \bsy{\lambda}^T\bsy{\widetilde{\omega}} \Big\} \\
    &\label{sp_feas_dual} \bsy{\Gamma} = \left\{
        \begin{array}{ll}
        \bsy{E}^T\bsy{\gamma}  + \bsy{W}^T\bsy{\pi} \leq \bsy{c} \\
        \bsy{\gamma} \geq \bsy{0} \\
        \end{array}
          \right.
\end{align}
\end{subequations}

\noindent {\color{Black}where $\bsy{\gamma}$ and $\bsy{\pi}$} are dual variables associated with constraints (\ref{suc_1}) and (\ref{suc_2}), respectively. $\bsy{\Gamma}$ denotes the feasible domain of the dual variables. 

{\color{Black}
\subsubsection{Relaxing bilinear terms in the reformulation}
}
For fixed $\bsy{y}$ and $\bsy{\lambda}$, the objective function (\ref{inner_max}) contains bilinear terms $\bsy{\pi}\widetilde{\omega}_i$. The literature presents several methods to solve robust optimization (RO) models with bilinear terms in the inner (sub-) problem, including heuristics \cite{wu2017adaptive} and mixed-integer programming (MIP) reformulations \cite{minguez2016robust, zeng2013solving, wang2014robust}. The former finds local optima for two-stage robust problems for which the uncertainty set is a general polyhedron (e.g., non-convex regular polyhedra \cite{grunbaum1977regular}). The latter finds exact solutions for RO problems by exploiting the special structure of the uncertainty set. A recent work by Zhao and Zeng \cite{zhao2012robust} develops an exact algorithm for two-stage RO problems for which the uncertainty set is a general polyhedron. The authors derive an equivalent MIP formulation for the inner problem by using strong Karush–Kuhn–Tucker (KKT) conditions. However, this MIP formulation is challenging to solve when binary variables are introduced to linearize complementary slackness constraints. Hence, it is difficult to derive exact solutions for RO models for which the uncertainty set is a general polyhedron. Thus, we leave the exploration of exact solution methods as future work. In this research, we focus on generating a lower bound for the reformulation of $\mcal{Q}^d$ described in (\ref{eq:rf_0}).

From formulation (\ref{eq:dual_sp}), we observe that $\bsy{\widetilde{\omega}}$ only appears in the objective function (\ref{inner_max}), thus different realizations of $\bsy{\widetilde{\omega}}$ do not affect the feasible region of the dual variables, $\bsy{\Gamma}$ \eqref{sp_feas_dual}. We note that $\bsy{\Gamma}$ is nonempty, since formulation (\ref{sub}) is feasible for any given first-stage solution due to load shedding and over-generation options. Let $(\wh{\bsy{\pi}}, \wh{\bsy{\gamma}})\in \bsy{\Gamma}$ be any feasible dual solution to formulation (\ref{eq:dual_sp}). It follows that $\bsy{\wh{y}}$ and $\bsy{\wh{\lambda}}$ are solutions of first-stage variables $\bsy{y}$ and $\bsy{\lambda}$, respectively. {\color{Black}Given a solution $\bsy{\wh{y}}$, $\bsy{\wh{\lambda}}$, $\bsy{\wh{\pi}}$ and $\bsy{\wh{\gamma}}$, the maximum value of (\ref{inner_max}) is the optimal objective value of \eqref{fixed_2DRO} where ${\widetilde{\bsy{\omega}}}$ are the only decision variables:}
\begin{equation}
\label{fixed_2DRO}
    \max_{\forall \bsy{\widetilde{\omega}} \in \Omega}\Big\{ (\bsy{h}-\bsy{G\wh{y}})^T\bsy{\wh{\gamma}} + \bsy{\wh{y}}^T\big(\sum^{|\bsy{\widetilde{\omega}}|}_{i=1}(\bsy{T}^i)^T\bsy{\wh{\pi}}\widetilde{\omega}_i\big) - \bsy{\wh{\lambda}}^T\bsy{\widetilde{\omega}} \Big\}
\end{equation}
%

\noindent {\color{Black}Based on LP geometry \cite{bertsimas1997introduction, boyd2004convex}, if $\Omega$ is a bounded and nonempty polyhedron, then there exists an optimal (worst-case) solution of $\bsy{\wt{\omega}}$ which is an extreme point of $\Omega$. However, as described in Eq.~\eqref{eq:support}, $\Omega$ is convex set bounded by a quadratic curve that has an infinite number of extreme points. Thus, we discretize the curve and create a polyhedral support set $\Omega'$ that is bounded by a piecewise linear relaxation of the curve. Hence, we modify formulation \eqref{fixed_2DRO} to be \eqref{fixed_2DRO_relaxed}. Under this formulation there must be a vertex of $\Omega'$ which is an optimal $\bsy{\wt{\omega}}$ for \eqref{fixed_2DRO_relaxed}.
\begin{equation}
\label{fixed_2DRO_relaxed}
    \max_{\forall \bsy{\widetilde{\omega}} \in \Omega'}\Big\{ (\bsy{h}-\bsy{G\wh{y}})^T\bsy{\wh{\gamma}} + \bsy{\wh{y}}^T\big(\sum^{|\bsy{\widetilde{\omega}}|}_{i=1}(\bsy{T}^i)^T\bsy{\wh{\pi}}\widetilde{\omega}_i\big) - \bsy{\wh{\lambda}}^T\bsy{\widetilde{\omega}} \Big\}
\end{equation}
} 
\noindent We formalize the properties of $\Omega'$ with Lemma \ref{prop_1}.
%
\begin{lem}\label{prop_1}
{\color{Black}Given that $\Omega'$ is a linear relaxation of $\Omega$ and $\Omega' \in \Omega$.} Then, for any given first-stage decisions $\bsy{y}$ and $\bsy{\lambda}$, {\color{Black}there exists an extreme point of $\Omega'$ which is an optimal (worst-case) solution of $\bsy{\wt{\omega}} \in \Omega'$}.
\end{lem}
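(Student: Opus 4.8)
The plan is to observe that, once the first-stage data $\bsy{\wh{y}},\bsy{\wh{\lambda}}$ and the feasible dual solution $\bsy{\wh{\pi}},\bsy{\wh{\gamma}}\in\bsy{\Gamma}$ are held fixed, the objective of \eqref{fixed_2DRO_relaxed} is an \emph{affine} function of the only remaining decision variable $\bsy{\widetilde{\omega}}$, and that $\Omega'$ is a nonempty bounded polyhedron; the conclusion then follows from the fundamental theorem of linear programming. First I would collect the three terms of the objective according to their dependence on $\bsy{\widetilde{\omega}}$. The term $(\bsy{h}-\bsy{G\wh{y}})^T\bsy{\wh{\gamma}}$ is a constant $c_0$ since it contains no component of $\bsy{\widetilde{\omega}}$. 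Writing the remaining terms component-wise, each entry $\widetilde{\omega}_i$ enters linearly with the fixed scalar coefficient $d_i := \bsy{\wh{y}}^T(\bsy{T}^i)^T\bsy{\wh{\pi}} - \wh{\lambda}_i$, so the objective reduces to $\bsy{d}^T\bsy{\widetilde{\omega}} + c_0$ with $\bsy{d}$ a constant vector; in particular it is linear in $\bsy{\widetilde{\omega}}$ rather than bilinear.

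Next I would verify the geometry of $\Omega'$. By construction $\Omega'$ is obtained from the support set \eqref{eq:support} by replacing the quadratic boundary $(\widetilde{\nu}^N)^2+(\widetilde{\nu}^E)^2\le(\overline{\nu}^M)^2$ with finitely many chordal (linear) inequalities and intersecting with the box constraints on $\widetilde{\nu}^N,\widetilde{\nu}^E$; hence $\Omega'$ is defined by finitely many linear inequalities and is a polyhedron. Boundedness is inherited from $\Omega'\subseteq\Omega$ together with the fact that $\Omega$ is contained in the disk of radius $\overline{\nu}^M$, and nonemptiness holds because the inner approximation still contains admissible points (for instance a neighbourhood of the origin satisfies every defining inequality). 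A nonempty bounded polyhedron is a polytope, equal to the convex hull of its finitely many extreme points.

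Finally I would invoke the fundamental theorem of linear programming \cite{bertsimas1997introduction}: maximizing the linear functional $\bsy{d}^T\bsy{\widetilde{\omega}}+c_0$ over the polytope $\Omega'$ attains its optimum at (at least) one extreme point of $\Omega'$. This yields an extreme-point optimizer $\bsy{\widetilde{\omega}}\in\Omega'$ of \eqref{fixed_2DRO_relaxed} for the given first-stage decisions $\bsy{y}$ and $\bsy{\lambda}$, which is exactly the claim.

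I do not expect a genuine obstacle, since the result is structurally a restatement of the fundamental theorem of LP. The only points requiring care are (i) confirming that all four quantities $\bsy{\wh{y}},\bsy{\wh{\lambda}},\bsy{\wh{\pi}},\bsy{\wh{\gamma}}$ are fixed data so that the maximization is over a truly linear objective, and (ii) ensuring that the piecewise-linear construction makes $\Omega'$ genuinely bounded, so that the program is not unbounded and an extreme-point maximizer is guaranteed to exist. Both are supplied by the construction preceding the lemma.
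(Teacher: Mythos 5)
Your proof is correct and follows essentially the same route as the paper's: observe that for fixed $\bsy{\wh{y}},\bsy{\wh{\lambda}},\bsy{\wh{\pi}},\bsy{\wh{\gamma}}$ the problem \eqref{fixed_2DRO_relaxed} is a linear program over the nonempty, bounded polyhedron $\Omega'$, and invoke LP geometry to obtain an extreme-point optimizer. Your explicit check that the objective collapses to an affine function $\bsy{d}^T\bsy{\widetilde{\omega}}+c_0$ of $\bsy{\widetilde{\omega}}$ is a useful detail the paper leaves implicit, but it is the same argument.
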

{\color{Black}
\begin{proof}
Given that $\Omega$ is bounded and nonempty as defined in \eqref{eq:support} and $\Omega$ is part of a max function in a minimization problem, $\Omega'$ is a linear relaxation of $\Omega$. As illustrated in Figure \ref{fig:Omega_relaxed}, $\Omega'$ is a nonempty polyhedron that is bounded by linear cuts going through a series of points on the curve (i.e., the boundary of $\Omega$). Then, for fixed decisions $\bsy{\wh{y}}$ and $\bsy{\wh{\lambda}}$, the problem \eqref{fixed_2DRO_relaxed} is a linear programming problem of finding the worst-case $\bsy{\wh{\omega}}$ over a nonempty and bounded polyhedron $\Omega'$. Thus, based on LP geometry \cite{bertsimas1997introduction, boyd2004convex}, there exists an extreme point of $\Omega'$ which is optimal.
\end{proof}. 
}
\begin{figure}[h]
    \centering
    \includegraphics[scale=0.51]{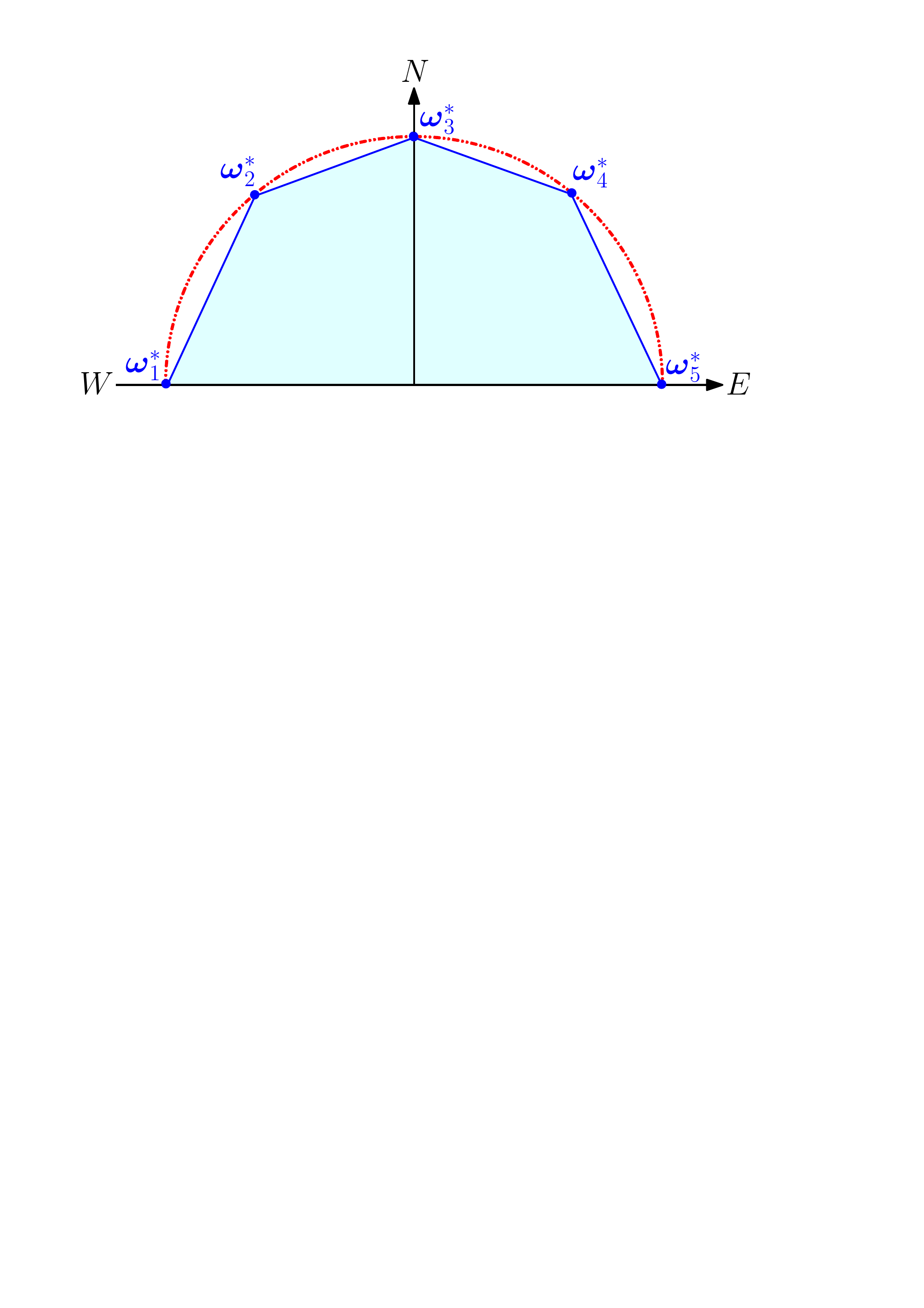}
    \caption{\color{Black}A linear relaxation of support set $\Omega$. The blue shadow represents $\Omega'$ and $\bsy{\omega}^*_i$ for $i \in \{1,2,..,5\}$ denote the vertices of $\Omega'$.}
    \label{fig:Omega_relaxed}
\end{figure}

{\color{Black} 
\subsubsection{A MIP reformulation of the two-stage DRO-OTSGMD problem}

{\color{black}
Based on proposition \ref{prop_1}, we derive a MIP formulation (denoted as $\mcal{Q}_v$) by introducing binary variables associated with each extreme point of $\Omega'$. The formulation is described as follows.
\vspace{-0.1cm}
\begin{subequations}
\label{eq:rf_11}
\allowdisplaybreaks
\begin{align}
    &\label{rf_11_obj} {\mcal{Q}}_v := \min_{\bsy{y}, \bsy{\lambda}, \eta} \bsy{a}^T\bsy{y} + \bsy{\mu}^T{\bsy{\lambda}} + \eta \\
    &\quad \textit{s.t.} \ \ \eqref{suc_0} \nonumber \\
    & \text{where} \nonumber\\
    &\label{sp_0} \quad \eta \geq \max_{\bsy{\gamma}, \bsy{\pi}, \bsy{\widetilde{\omega}}} (\bsy{h}-\bsy{Gy})^T\bsy{\gamma} + \bsy{y}^T\big(\sum^{|\bsy{\widetilde{\omega}}|}_{i=1}\sum^{|\mcal{K}|}_{j=1}(\bsy{T}^i)^T\bsy{\zeta}_{j}\widetilde{\omega}^*_{ij}\big) - \bsy{\lambda}^T\bsy{\widetilde{\omega}} \\
    &\label{sp_1}\quad\quad \textit{s.t.} \ \bsy{E}^T\bsy{\gamma}  + \bsy{W}^T\bsy{\pi} \leq \bsy{c} \\
    &\label{sp_2}\quad \qquad \quad \bsy{\gamma} \geq \bsy{0} \\
    &\label{sp_3}\quad \qquad\quad {\zeta}_{jl} \in \langle \beta_{j}, \ {\pi_l} \rangle^{MC} \qquad \forall j \in \mcal{K}, \ \forall l \in \{1,...,|(\ref{suc_2})|\} \\
    &\label{sp_4}\quad \qquad \quad \bsy{\widetilde{\omega}} = \sum^{|\mcal{K}|}_{j=1}{\beta}_{j}\bsy{\widetilde{\omega}}^{*}_j \\
    &\label{sp_5}\quad \qquad \quad \sum^{|\mcal{K}|}_{j =1}{\beta}_{j} = 1 \\
    &\label{sp_6}\quad \qquad \quad {\beta}_j \in \{0,1\}, \qquad \forall j \in \mcal{K}
\end{align}
\end{subequations}%
\noindent where 
$\mcal{K}$ is the set of vertices (denoted as $\bsy{\widetilde{\omega}}^*$) of set $\Omega'$
${\beta}_j$ are binary variables used to control the selection of an extreme point $\bsy{\widetilde{\omega}}^*_j$. $\bsy{\zeta}_j$ are continuous variables introduced for linearizing bilinear products $\beta_{j}\bsy{\pi}$ via McCormick Relaxations as described in \eqref{eq:SMC}. Note that this linearization is exact due to $\beta_j \in \{0,1\}$ \cite{Nagarajan2018lego,nagarajan2017adaptive}. The underlying idea of this MIP model is that we only consider the extreme points of $\Omega$ as candidate solutions for $\bsy{\wt{\omega}}$. Hence, for any given first-stage decision $(\bsy{y}, \bsy{\lambda})$, the resulting worst-case $\bsy{\widetilde{\omega}}$ is a vertex of $\Omega'$.}

Since $\mcal{K}$ collects the vertices of $\Omega'$ and $\mcal{K} \neq \emptyset$, lemma \ref{prop_1} proves that there exists a vertex $\bsy{\wh{\omega}}^*_j \in \mcal{K}$ which is the worst-case scenario of $\bsy{\wh{\omega}} \in \Omega'$, such that:
\begin{equation}
    \max_{\forall \bsy{\widetilde{\omega}}\in\mcal{K}}\big\{\mcal{H}(\bsy{y}, \bsy{\widetilde{\omega}}) - \bsy{\lambda}^T\bsy{\widetilde{\omega}}\big\} = \max_{\forall \bsy{\widetilde{\omega}}\in \Omega'}\big\{\mcal{H}(\bsy{y}, \bsy{\widetilde{\omega}}) - \bsy{\lambda}^T\bsy{\widetilde{\omega}}\big\}
\end{equation}
In other words, we reduce the feasible set of $\bsy{\wh{\omega}}$ to a series of extreme points in $\mcal{K}$ and solve the reformulation $\mcal{Q}_v$ in \eqref{eq:rf_11} to identify the minimum worst-case expected total cost. Recall that formulation \eqref{eq:rf_0} (denoted as $\mcal{Q}_d$) minimizes the worst-case expected total cost over ambiguity set $\Omega$. Thus, the optimal solution of problem $\mcal{Q}_v$ \eqref{eq:rf_11} is less than the minimum cost obtained by solving $\mcal{Q}_d$, as $\Omega'$ is a subset of $\Omega$. In addition, since the initial DR-OTSGMD model $\mcal{Q}_o$ is nonlinear and model $\mcal{Q}_d$ is a linear relaxation of $\mcal{Q}_o$ as described in Section \ref{OA}, the optimal solution of $\mcal{Q}_d$ is a lower bound of $\mcal{Q}_o$. Overall, $\mcal{Q}_v$ returns a lower bound of $\mcal{Q}_d$, while $\mcal{Q}_d$ generates a lower bound of $\mcal{Q}_o$. We prove this conclusion in proposition \ref{prop_2} and \ref{prop_3} as follows.
}
\begin{prop}
\label{prop_2} The worst-case expected total cost obtained from formulation $\mcal{Q}_v$ is a lower bound to the reformulated two-stage DR-OTSGMD problem $\mcal{Q}_d$.
\end{prop}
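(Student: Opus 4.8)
The plan is to prove the inequality $\mcal{Q}_v \leq \mcal{Q}_d$ by showing that every triple $(\bsy{y}, \bsy{\lambda}, \eta)$ that is feasible for $\mcal{Q}_d$ in \eqref{eq:rf_0} is also feasible for $\mcal{Q}_v$ in \eqref{eq:rf_11}. Since the two formulations carry the \emph{identical} outer objective $\bsy{a}^T\bsy{y} + \bsy{\mu}^T\bsy{\lambda} + \eta$ and the same first-stage constraints \eqref{suc_0}, enlarging the feasible set can only decrease the minimum, which delivers the claimed lower bound.

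First I would put both optimality cuts into a common inner-maximization form. For $\mcal{Q}_d$, the robust cut \eqref{optimality_cut} enforced for all $\widetilde{\bsy{\omega}} \in \Omega$ is exactly \eqref{eq:inner_max}, namely $\eta \geq \max_{\widetilde{\bsy{\omega}} \in \Omega}\{\mcal{H}(\bsy{y}, \widetilde{\bsy{\omega}}) - \bsy{\lambda}^T\widetilde{\bsy{\omega}}\}$. For $\mcal{Q}_v$, I would argue that the block \eqref{sp_0}--\eqref{sp_6} reproduces $\max_{\widetilde{\bsy{\omega}} \in \Omega'}\{\mcal{H}(\bsy{y}, \widetilde{\bsy{\omega}}) - \bsy{\lambda}^T\widetilde{\bsy{\omega}}\}$. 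This rests on three facts already available in the excerpt: strong LP duality rewrites $\mcal{H}$ as the dual maximization over $(\bsy{\gamma},\bsy{\pi}) \in \bsy{\Gamma}$ as in \eqref{eq:dual_sp}, and because $\bsy{\Gamma}$ is independent of $\widetilde{\bsy{\omega}}$ the two maximizations can be merged; the McCormick linearization \eqref{sp_3} of the products $\beta_j\bsy{\pi}$ is exact since each $\beta_j$ is binary; and the vertex-selection constraints \eqref{sp_4}--\eqref{sp_6} force $\widetilde{\bsy{\omega}}$ to equal a single extreme point $\widetilde{\bsy{\omega}}^*_j \in \mcal{K}$. By Lemma \ref{prop_1}, restricting the maximization to the vertices $\mcal{K}$ entails no loss, because $\mcal{H}(\bsy{y}, \cdot) - \bsy{\lambda}^T(\cdot)$ is a convex piecewise-linear function of $\widetilde{\bsy{\omega}}$ and therefore attains its maximum over the bounded polyhedron $\Omega'$ at an extreme point.

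The decisive step is the containment $\Omega' \subseteq \Omega$, whose vertices lie on the boundary curve of the convex set $\Omega$. For every fixed $(\bsy{y}, \bsy{\lambda})$ this yields
\begin{equation*}
\max_{\widetilde{\bsy{\omega}} \in \Omega'}\big\{\mcal{H}(\bsy{y}, \widetilde{\bsy{\omega}}) - \bsy{\lambda}^T\widetilde{\bsy{\omega}}\big\} \ \leq\ \max_{\widetilde{\bsy{\omega}} \in \Omega}\big\{\mcal{H}(\bsy{y}, \widetilde{\bsy{\omega}}) - \bsy{\lambda}^T\widetilde{\bsy{\omega}}\big\}.
\end{equation*}
Hence the lower bound that $\mcal{Q}_v$ imposes on $\eta$ is never larger than the one $\mcal{Q}_d$ imposes, so any $(\bsy{y}, \bsy{\lambda}, \eta)$ admissible for the $\mcal{Q}_d$ cut is automatically admissible for the $\mcal{Q}_v$ cut. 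Combined with the shared objective and the common constraint \eqref{suc_0}, this gives $\mcal{Q}_v \leq \mcal{Q}_d$.

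I expect the main obstacle to be the second step: verifying rigorously that \eqref{sp_0}--\eqref{sp_6} is an \emph{exact} reformulation of $\max_{\widetilde{\bsy{\omega}} \in \Omega'}\{\mcal{H} - \bsy{\lambda}^T\widetilde{\bsy{\omega}}\}$ and not a further relaxation. The two subtleties to pin down are that merging the dual maximization of $\mcal{H}$ with the maximization over $\widetilde{\bsy{\omega}}$ is legitimate (guaranteed by the $\widetilde{\bsy{\omega}}$-independence of $\bsy{\Gamma}$ noted after \eqref{eq:dual_sp}), and that the bilinear terms $\beta_j\bsy{\pi}$ arising from the vertex expansion \eqref{sp_4} are linearized with no gap owing to the integrality of $\beta_j$. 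Once this equivalence is secured, the containment $\Omega' \subseteq \Omega$ closes the argument immediately.
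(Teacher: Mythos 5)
Your proposal is correct and follows essentially the same route as the paper: both arguments hinge on the containment $\mcal{K} \subseteq \Omega' \subseteq \Omega$, which makes the inner maximization defining the $\eta$-cut in $\mcal{Q}_v$ no larger than the one in $\mcal{Q}_d$ for every fixed $(\bsy{y},\bsy{\lambda})$, hence $\mcal{Q}^*_v \leq \mcal{Q}^*_d$. Your additional care in verifying that \eqref{sp_0}--\eqref{sp_6} exactly represents the maximization over the vertices is a reasonable elaboration of material the paper places in the discussion preceding the proposition rather than a genuinely different argument.
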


\begin{proof}
Let $\mcal{Q}^*_d$ and $\mcal{Q}^*_v$ be the optimal objective values to formulations $\mcal{Q}_d$ and $\mcal{Q}_v$, respectively. $\eta^*_d$ and $\eta^*_v$ represent the optimal values of $\eta$ to models $\mcal{Q}_d$ and $\mcal{Q}_v$, respectively. Since $\mcal{K}$ represents the vertexes of $\Omega'$, then $\mcal{K} \subseteq \Omega' \subseteq \Omega$ and the following holds: 
\begin{equation}
    \max_{\forall \bsy{\widetilde{\omega}}\in\Omega}\big\{\mcal{H}(\bsy{y}, \bsy{\widetilde{\omega}}) - \bsy{\lambda}^T\bsy{\widetilde{\omega}}\big\} \geq \max_{\forall \bsy{\widetilde{\omega}}\in\mcal{K}}\big\{\mcal{H}(\bsy{y}, \bsy{\widetilde{\omega}}) - \bsy{\lambda}^T\bsy{\widetilde{\omega}}\big\}
\end{equation}
\noindent As a result, for any given first-stage decisions $\bsy{y}$ and $\bsy{\lambda}$, the following is true:  $\bsy{a}^T\bsy{y} + \bsy{\mu}^T\bsy{\lambda} + \eta^*_d \geq \bsy{a}^T\bsy{y} + \bsy{\mu}^T\bsy{\lambda} + \eta^*_v$ (see constraints (\ref{optimality_cut}) and (\ref{sp_0})), i.e., $\mcal{Q}^*_v \leq \mcal{Q}^*_d$.  
\end{proof}
\begin{prop}Solving formulation $\mcal{Q}_v$ yields a lower bound of the worst-case expected total cost to the two-stage DR-OTSGMD problem $\mcal{Q}_o$.
\label{prop_3}
\end{prop}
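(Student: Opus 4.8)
The plan is to prove the result by transitivity, chaining Proposition~\ref{prop_2} together with the relaxation relationship between $\mcal{Q}_d$ and $\mcal{Q}_o$ established in Section~\ref{OA}. Writing $\mcal{Q}^*_o$, $\mcal{Q}^*_d$, $\mcal{Q}^*_v$ for the three optimal values, the goal is to show $\mcal{Q}^*_v \le \mcal{Q}^*_o$. Proposition~\ref{prop_2} already supplies $\mcal{Q}^*_v \le \mcal{Q}^*_d$, so it suffices to establish the single remaining inequality $\mcal{Q}^*_d \le \mcal{Q}^*_o$ and then compose the two.

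For that remaining inequality, the key step is to verify that $\mcal{Q}_d$ is a genuine relaxation of $\mcal{Q}_o$, i.e. that every transformation applied in Section~\ref{OA} can only decrease the optimal value of a minimization problem. I would argue this in two layers. First, for the inner second-stage problem I would fix the first-stage decisions $(\bsy{y},\bsy{\lambda})$ and a realization $\widetilde{\bsy{\omega}} \in \Omega$, and observe that each device used in Section~\ref{OA} — the perspective/piecewise-linear cuts \eqref{perspective_cut_l}, the SOC and outer-approximation relaxations of the AC physics \eqref{eq:vi_soc}--\eqref{eq:ac_soc}, the thermal-limit relaxation \eqref{eq:thermal_l}, and the McCormick relaxations \eqref{eq:dc_l} of the bilinear GIC couplings — enlarges the feasible set while underestimating the objective (e.g. $\widecheck{\rho}_i \le \rho_i^2$ and $\widecheck{\Delta}_i \le (\Delta_i^p)^2$). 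Hence the relaxed recourse value $\mcal{H}$ is pointwise no larger than the true recourse value for every $\widetilde{\bsy{\omega}}$. Second, because this inequality holds pointwise on $\Omega$, it is preserved under integration against any $\bb{P}$ and therefore under the supremum over the ambiguity set $\bb{Q}$; together with the fact that the first-stage feasible region \eqref{suc_0} is unchanged (the perspective cuts only relax the objective term $c_i^2\rho_i^2$), taking the infimum over the first-stage variables yields $\mcal{Q}^*_d \le \mcal{Q}^*_o$.

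I would also record that the dual reformulation \eqref{eq:Q1}--\eqref{eq:rf_0} of the worst-case expectation is an \emph{exact} equivalence rather than a further relaxation: strong duality for the moment problem holds because the ambiguity set $\bb{Q}$ is nonempty and $\Omega$ is compact, so no additional gap is introduced at this step and the only looseness separating $\mcal{Q}_d$ from $\mcal{Q}_o$ comes from the convex relaxations above. Combining $\mcal{Q}^*_d \le \mcal{Q}^*_o$ with Proposition~\ref{prop_2}'s $\mcal{Q}^*_v \le \mcal{Q}^*_d$ then gives $\mcal{Q}^*_v \le \mcal{Q}^*_o$, which is the claim.

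I expect the main obstacle to be the bookkeeping in the second layer — confirming that a per-scenario lower bound on $\mcal{H}$ genuinely propagates through the worst-case expectation operator and the outer infimum without the discretization of the support set interfering. In particular one must be careful that the relaxations of Section~\ref{OA} act on the recourse problem $\mcal{H}$ itself (so the inequality is scenario-by-scenario and independent of $\widetilde{\bsy{\omega}}$), and are kept separate from the support discretization $\Omega \to \Omega'$, which is entirely absorbed into Proposition~\ref{prop_2}. Once that separation is made explicit, the monotonicity of $\sup_{\bb{P}}\bb{E}_{\bb{P}}[\cdot]$ and of $\min_{\bsy{y}}$ under pointwise domination closes the argument.
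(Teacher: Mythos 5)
Your proposal is correct and follows essentially the same route as the paper: the paper likewise obtains $\mcal{Q}^*_v \le \mcal{Q}^*_d \le \mcal{Q}^*_o$ by citing Section~\ref{OA} for the relaxation inequality $\mcal{Q}^*_d \le \mcal{Q}^*_o$ and Proposition~\ref{prop_2} for $\mcal{Q}^*_v \le \mcal{Q}^*_d$, then composing the two. Your write-up simply fills in the justification the paper leaves implicit (pointwise domination of the recourse value, monotonicity of $\sup_{\bb{P}}\bb{E}_{\bb{P}}[\cdot]$ and of the outer minimization, and exactness of the duality reformulation), which is a welcome elaboration but not a different argument.
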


\begin{proof}
Let $\mcal{Q}^*_o$, $\mcal{Q}^*_d$, and $\mcal{Q}^*_v$ be the optimal objective function values of formulations $\mcal{Q}_o$, $\mcal{Q}_d$, and $\mcal{Q}_v$, respectively. As described in Section \ref{OA}, $\mcal{Q}_d$ is a linear relaxation of  $\mcal{Q}_o$. Hence, $\mcal{Q}_d$ yields a lower bound for $\mcal{Q}_o$, i.e., $\mcal{Q}^*_d \leq \mcal{Q}^*_o$. Further, $\mcal{Q}^*_v \leq \mcal{Q}^*_d$ based on proposition \ref{prop_2}. Thus, $\mcal{Q}^*_v \leq \mcal{Q}^*_d \leq \mcal{Q}^*_o$.
\end{proof}

\section{Solution Methodology: A column-and-constraint generation algorithm}\label{sec:methodology}

To solve the two-stage DR-OTSGMD problem, we use the column-and-constraint generation (C$\&$CG) algorithm described in \cite{zeng2013solving}. Similar to Benders' decomposition, the C$\&$CG algorithm is a cutting plane method. It iteratively refines the feasible domain of a problem by sequentially generating a set of recourse variables and their associated constraints. The algorithmic description of the C$\&$CG procedure is presented in Algorithm \ref{alg:CCG}. In this algorithm, the notation $\mcal{O}$ is used to denote a subset of $\mcal{K}$. LB and UB represent incumbent lower and an upper bounds of formulation (\ref{eq:rf_11}), respectively. Here, $\epsilon$ is a sufficiently small positive constant used to determine convergence. Lines \ref{ch4_while_begin}-\ref{ch4_while_end} are the main body of the C$\&$CG and they describe a cutting plane procedure for the first $\fr{K}$ iterations. In lines \ref{ch4_solve_mp}-\ref{ch4_record_mp}, the LB is evaluated during iteration $\fr{K}$ using the incumbent solution $\widehat{\bsy{y}}^{\fr{K}+1}$, $\widehat{\bsy{\lambda}}^{\fr{K}+1}$ and $\widehat{\eta}^{\fr{K}+1}$. Note that in the initial iteration ($\fr{K}=0$), set $\mcal{O}$ is empty and master problem $\mcal{P}(\cdot)$ lacks constraints (\ref{ch4_ccg1})-(\ref{ch4_ccg3}). In lines \ref{ch4_solve_sp}-\ref{ch4_save_sp}, the subproblem $\mcal{Q}(\cdot)$ is solved, the UB is estimated, and the worst-case scenario $\widehat{\widetilde{\omega}}^{\fr{K}+1}$ is generated using the incumbent decision ($\widehat{\bsy{y}}^{\fr{K}+1}$, $\widehat{\bsy{\lambda}}^{\fr{K}+1}$). Line \ref{ch4_ccg_cuts} generates a new set of recourse variables $\bsy{x}^{\fr{K}+1}$ and associated constraints (\ref{ch4_eq:ccg_cuts}) for $\mcal{P}(\cdot)$. Note that constraint (\ref{ch4_ccg_cut_1}) is valid only if subproblem $\mcal{Q}(\cdot)$ is bounded (i.e., complete recourse). 
Line \ref{ch4_back} expands set $\mcal{O}$ and adds these newly-generated variables and constraints to the master problem $\mcal{P}(\cdot)$ to tighten the feasible domain of the first-stage variables (line \ref{ch4_solve_mp}). The process is repeated until the objective values of the upper and lower bound converge (line \ref{ch4_while_begin}). 
\begin{algorithm}[h]
\allowdisplaybreaks
\caption{Column-and-Constraint Generation (CCG)}
\label{alg:CCG}
\begin{algorithmic}[1]
    \Function{CCG}{}
      \State Set $ \fr{K} \gets 0$, $LB \gets -\infty$, $UB \gets +\infty$, $\mcal{O} = \emptyset$
    
      \While{$\frac{|UB-LB|}{LB} \leq \epsilon$} \label{ch4_while_begin}
        
      \State \label{ch4_solve_mp} Solve the following master problem
      \begin{subequations}
      \small
      \allowdisplaybreaks
        \begin{align}
            & {\mcal{P}}(\widehat{\widetilde{\bsy{\omega}}} \in \mcal{O} ) = \min_{\bsy{y}, \bsy{\lambda}, \eta, \bsy{x}} \bsy{a}^T\bsy{y} + \bsy{\mu}^T{\bsy{\lambda}} + \eta \nonumber \\
            &\quad \textit{s.t.} \  \ (\ref{suc_0}) \nonumber \\
            &\label{ch4_ccg1}\quad \qquad \eta \geq \bsy{c}^T\bsy{x}^l - {\bsy{\lambda}^T}\widehat{\widetilde{\bsy{\omega}}}^l \qquad \forall l \leq \fr{K}  \\
            &\label{ch4_ccg2}\quad \qquad \bsy{Gy} + \bsy{Ex}^l \geq \bsy{h} \qquad \forall l \leq \fr{K}  \\
            &\label{ch4_ccg3}\quad \qquad \bsy{T}(\widehat{\widetilde{\bsy{\omega}}}^{l})\bsy{y} = \bsy{Wx}^l \qquad \forall l \leq \fr{K}  
        \end{align}
      \end{subequations} 
      \State \label{ch4_record_mp} Use the optimal solution $\widehat{\bsy{y}}^{\fr{K}+1}$, $\widehat{\bsy{\lambda}}^{\fr{K}+1}$ and $\widehat{\eta}^{\fr{K}+1}$ to calculate $LB \gets \mcal{P}(\wh{\wt{\bsy{\omega}}}\in \mcal{O})$

      \State \label{ch4_solve_sp} Solve
      \begin{subequations}
      \small
      \allowdisplaybreaks
      \begin{align}
          &\qquad \quad {\mcal{Q}}(\widehat{\bsy{y}}^{\fr{K}+1}, \widehat{\bsy{\lambda}}^{\fr{K}+1}) = \max_{\bsy{\gamma}, \bsy{\pi}, \bsy{\widetilde{\omega}}} (\bsy{h}-\bsy{G}\wh{\bsy{y}}^{\fr{K}+1})^T\bsy{\gamma} + (\widehat{\bsy{y}}^{\fr{K}+1})^T\big(\sum^{|\bsy{\widetilde{\omega}}|}_{i=1}\sum^{|\mcal{K}|}_{j=1}(\bsy{T}^i)^T\bsy{\zeta}_{j}\widetilde{\omega}^*_{ij}\big) - (\widehat{\bsy{\lambda}}^{\fr{K}+1})^T\bsy{\widetilde{\omega}}  \\
          &\qquad \quad \quad \textit{s.t.} \ (\ref{sp_1})-(\ref{sp_6}) \ \nonumber
      \end{align}
      \end{subequations}
       
      \State\label{ch4_save_sp} Use the optimal solution $\widehat{\widetilde{\bsy{\omega}}}^{\fr{K}+1}$ to calculate \small $UB \gets LB - \widehat{\eta}^{\fr{K}+1} + {\mcal{Q}}(\widehat{\bsy{y}}^{\fr{K}+1}, \widehat{\bsy{\lambda}}^{\fr{K}+1})$
       
      \State \label{ch4_ccg_cuts} Generate a new set of recourse variables $\bsy{x}^{\fr{K}+1}$ and the following constraints for $\mcal{P(\cdot)}$ \begin{subequations}
      \label{ch4_eq:ccg_cuts}
      \small
      \allowdisplaybreaks
        \begin{align}
            & \label{ch4_ccg_cut_1}\eta \geq \bsy{c}^T\bsy{x}^{\fr{K}+1} - {\bsy{\lambda}^T}\widehat{\widetilde{\bsy{\omega}}}^{\fr{K}+1} \\
            & \label{ch4_ccg_cut_2}\bsy{Gy} + \bsy{Ex}^{\fr{K}+1} \geq \bsy{h}  \\
            & \label{ch4_ccg_cut_3}\bsy{T}(\widehat{\widetilde{\bsy{\omega}}}^{\fr{K}+1})\bsy{y} = \bsy{Wx}^{\fr{K}+1} \qquad
        \end{align}
      \end{subequations} 
       
      \State \label{ch4_back} Update $\mcal{O} \gets \mcal{O} \bigcup \widehat{\widetilde{\bsy{\omega}}}^{\fr{K}+1}$ and $\fr{K} \gets \fr{K}+1$

      \EndWhile \label{ch4_while_end}
      \State \Return $\widehat{\bsy{y}}^{\fr{K}+1}$
    \EndFunction
\end{algorithmic}	
\end{algorithm}

Note that the optimal solution of the relaxed reformulation (\ref{eq:rf_11}) is obtained by enumerating all the possible uncertain scenarios (vertexes) in $\mcal{K}$. 
However, even though the number extreme points is linear in the problem size,
full enumeration is computationally intractable when subset $\mcal{K}$ is large. One advantage of the C$\&$CG is that the computational efforts is often significantly reduced by using a partial enumeration of non-trivial scenarios of the random variables $\bsy{\widetilde{\omega}}$. Additionally, the C$\&$CG algorithm is known to converge in a finite number of iterations since the number of extreme points in $\mcal{K}$ is finite \cite{zhao2012robust}.

\section{Case Study}\label{Sec:case study}
This section analyzes the performance of 
our approach on 
a power system that is exposed to uncertain  geo-electric fields induced by GMDs. We use a modified version of the Epri21 system \cite{PowerGMD}. 
The size of this network
is comparable to previous work \cite{zhu2015blocking} that considered minimization of quasi-static GICs and did not consider ACOPF with topology control. Computational experiments were performed using the HPC Palmetto cluster at Clemson University with Intel Xeon E5-2665, 24 cores and 120GB of memory. All algorithms and models are implemented in Julia using using JuMP \cite{dunning2017jump}. All problems are solved using CPLEX 12.7.0 (default options).

\subsection{Data Collection and Analysis}
The main source of historical data for GMDs is recent work by Woodroffe \textit{et al.}\cite{woodroffe2016latitudinal}. The authors analyzed 100 years of data related to storms and their paper indicates that the magnitude of the corresponding induced geo-electric fields varies with the magnetic latitudes. The authors also categorized geomagnetic storms into three classes, strong, severe and extreme, according to the range of geoelectromagnetic disturbances (Dst). Table \ref{tab:gmd_parameter} summarizes parameters used for describing the uncertain electric fields induced by GMDs. In this table, $\overline{\nu}^M$ denotes the maximum amplitude of geo-electric fields induced by GMDs (see Section \ref{sec:Model of Uncertainty}).  Its value is based on a $ 95\% $ confidence interval (CI) of 100-year peak GMD magnitudes \cite{woodroffe2016latitudinal} (Table 1-3 in \cite{woodroffe2016latitudinal}). The values of ${\mu}^N$ and ${\mu}^E$ are estimated via extreme value analysis of electric fields from 15 geomagnetic storms using a Generalized Extreme Value Distribution model for both northward and eastward components (i.e., $\widetilde{\nu}^N$ and $\widetilde{\nu}^E$). We also generate the vertexes of $\Omega$ by partitioning field directions between 0\degree and 180\degree spaced by 2\degree, resulting in 90 GMD extreme points.
%
\begin{table}[h]
    \renewcommand{\arraystretch}{1.02}
    \centering
    \small
    \label{gmd_parameter}
    \captionsetup{font=small}
    \caption[Peak GMD amplitudes for geomagnetic storms with different magnetic latitudes]{Peak GMD amplitudes for geomagnetic storms with different magnetic latitudes based on 100 years of historical data. $\mu^E$ and $\mu^N$ are the mean values of geo-electric fields in the eastward and northward directions. MLAT denotes the geomagnetic latitude. Dst denotes geoelectromagnetic disturbances. }
    \begin{tabular}{lccc}
        \toprule
         &\multicolumn{3}{c}{$\overline{\nu}^M$  ($\mu^N$, $\mu^E$) (V/km)}  \\
        \cmidrule(lr){2-4} 
         & Strong & Severe & Extreme \\
         MLAT &$(-100 nT \geq Dst > -200 nT)$ & $(-200 nT \geq Dst > -300 nT)$ & $(-300 nT \geq Dst)$ \\
         \midrule 
         $40\degree$--$45\degree$ & $1.6 \  (0.9, 0.8)$ & $2.0 \ (0.9, 0.8) $ & $3.5 \ (1.1, 0.9) $\\
         $45\degree$--$50\degree$ & $1.2 \ (0.7, 0.7)$ & $1.6 \ (0.8, 0.7) $ & $3.5 \ (1.5, 1.3) $\\
         $50\degree$--$55\degree$ & $3.5 \ (2.1, 1.8)$ & $5.0 \ (2.5, 2.1) $& $6.0 \ (3.1, 2.7) $ \\
         $55\degree$--$60\degree$ &
         $11.5 \ (6.6, 5.6)$ & $6.6 \ (3.7, 3.1) $& $9.1 \ (4.2, 3.6) $\\
         $60\degree$--$65\degree$ &
         $6.6 \ (5.0, 4.3)$ & $6.6 \ (4.3, 3.6) $ & $12.7 \ (5.9, 5.1) $\\
         $65\degree$--$70\degree$ &
         $8.8 \ (6.1, 5.2)$ & $8.8 \ (5.3, 4.5) $& $10.6 \ (5.8, 4.9) $ \\
         $70\degree$--$75\degree$ &
         $7.7 \ (5.1, 4.3)$ & $6.3 \ (3.9, 3.3) $& $16.1 \ (6.8, 5.8) $ \\
         \bottomrule
    \end{tabular}
    \label{tab:gmd_parameter}
\end{table}%
%

While most of the parameters for modeling GIC are present in the Epri21 test case, there was some missing data.  We next discuss the data we added or modified in the Epri21 system (Tables \ref{tb:para_1}--\ref{tab:gen_data}). First, we geolocated the system in 
Quebec, Canada. To convert Quebec's geographic latitudes into magnetic latitudes we used the method described in Appendix A, which yields a magnetic latitude of $55\degree-60\degree$.
In this model, the cost of load shedding and over-consuming load is ten times the most expensive generation cost; and the fuel cost coefficients of additional real-power generation (i.e., $\Delta_i$) is 20\% more than the reserved generation prior to a storm.

\begin{table}[h]
  \allowdisplaybreaks
  \captionsetup{font=small}
  \caption[Transformer data and transmission line data]{(a) Transformer winding resistance and $k$ are estimated based on the test cases provided in \cite{GIC2013flow, horton2012test}. (b) The nominal line length {\color{black}for the Epri21 system \cite{PowerGMD}} {\color{black}is used to derive} an approximate geospatial layout of the power system nodes.}
  \centering
  \footnotesize
  \subtable[Transformer data]{
  \setlength{\tabcolsep}{0.3em}
  \begin{tabular}{lccccccc}
  \toprule
  &&Resistance&&Resistance&&&\\
  Name&Type&W1(Ohm)&Bus No.&W2(Ohm)&Bus No.&Line No.&k (p.u.)\\
  \midrule
T 1  & Wye-Delta  & 0.1 & 1 & 0.001 & 2 & 16 & 1.2\\
T 2  & Gwye-Gwye & 0.2 & 4 & 0.1 & 3 & 17 & 1.6\\
T 3  & Gwye-Gwye & 0.2 & 4 & 0.1 & 3 & 18 & 1.6\\
T 4  & Auto & 0.04 & 3 & 0.06 & 4 & 19 & 1.6\\
T 5  & Auto & 0.04 & 3 & 0.06 & 4 & 20 & 1.6\\
T 6  & Gwye-Gwye & 0.04 & 5 & 0.06 & 20 & 21 & 1.6\\
T 7  & Gwye-Gwye & 0.04 & 5 & 0.06 & 20 & 22  & 1.6\\
T 8  & GSU & 0.15 & 6 & 0.001 & 7 & 23 & 0.8\\
T 9  & GSU & 0.15 & 6 & 0.001 & 8 & 24 & 0.8\\
T 10 & GSU & 0.1 & 12 & 0.001 & 13 & 25 & 0.8\\
T 11 & GSU & 0.1 & 12 & 0.001 & 14 & 26 & 0.8\\
T 12 & Auto & 0.04 & 16 & 0.06 & 15 & 27 & 1.1\\
T 13 & Auto & 0.04 & 16 & 0.06 & 15 & 28 & 1.1\\
T 14 & GSU & 0.1 & 17 & 0.001 & 18 & 29 &1.2\\
T 15 & GSU & 0.1 & 17 & 0.001 & 19 & 30 & 1.2\\
  \bottomrule
  \end{tabular}
  }
  \subtable[Transmission line data]{
  \setlength{\tabcolsep}{0.2em}
  \begin{tabular}{lcccc}
  \toprule
     & From & To & Length \\
    Line & Bus & Bus & (km) \\
    \midrule
    1 & 2 & 3 & 122.8 \\
    2 & 4 & 5 & 162.1\\
    3 & 4 & 5 & 162.1\\
    4 & 4 & 6 & 327.5 \\
    5 & 5 & 6 & 210.7 \\
    6 & 6 & 11 & 97.4 \\
    7 & 11 & 12 & 159.8\\
    8 & 15 & 4 & 130.0 \\
    9 & 15 & 6 & 213.5 \\
    10 & 15 & 6 & 213.5 \\
    11 & 16 & 20 & 139.2 \\
    12 & 16 & 17 & 163.2 \\
    13 & 17 & 20 & 245.8 \\
    14 & 17 & 2 & 114.5 \\
    15 & 21 & 11 & 256.4 \\
  \bottomrule
  \end{tabular}
  }
  \label{tb:para_1}
\end{table}%

\begin{table}[h]
\color{black}
  \captionsetup{font=small}
  \caption[Substation data and other parameters]{(a) The substation grounding resistance $(GR)$ is estimated from typical values of grounding resistance of substations provided in \cite{morstad2012grounding}. (b) The original line parameters $r_{e}^o$ and $x_{e}^o$ are scaled by $\beta_{e}$, a ratio from new to original line lengths.}
  \centering
  \footnotesize
  \subtable[Substation data]{
  \renewcommand{\arraystretch}{1.26}
  \setlength{\tabcolsep}{0.7em}
  \begin{tabular}{lccccccc}
  \toprule
    Name&Latitude&Longitude&GR(Ohm)\\
  \midrule
    SUB 1&46.61&-77.87&0.20 \\
    SUB 2&47.31&-76.77&0.20 \\
    SUB 3&46.96&-74.68&0.20 \\
    SUB 4&46.55&-76.27&1.00 \\
    SUB 5&45.71&-74.56&0.10 \\
    SUB 6&46.38&-72.02&0.10 \\
    SUB 7&47.25&-72.09&0.22 \\
    SUB 8&47.20&-69.98&0.10 \\
  \bottomrule
  \end{tabular}
  }
  \hspace{0.1cm}
  \subtable[Other parameters]{
  \renewcommand{\arraystretch}{1.18}
  \setlength{\tabcolsep}{0.7em}
  \begin{tabular}{l|cccc}
  \toprule
  $\kappa_i^+$ & \$ 1000 /MW (or MVar)\\
  $\kappa_i^-$ & \$ 1000 /MW (or MVar)\\
  $\underline{v}_i$ & 0.9 p.u. \\
  $\overline{v}_i$ & 1.1 p.u. \\
  $c_i^{R1}$ & $120\%c_i^1$ \\
  $c_i^{R2}$ & $120\%c_i^{2}$ \\
  $\overline{I}^a_{e}$ &  $s_{e}/\min\{\underline{v}_i, \underline{v}_j\}$\\
  $\overline{\theta}$&30\degree \\
  \bottomrule 
  \end{tabular}
  }
  \label{tb:para_2}
\end{table}%
%
%
\begin{table}[h]
    \captionsetup{font=small}
    \caption{Generator Data}
    \centering
    \small
    \begin{tabular}{lcccccccc}
    \toprule
        Name & Bus No. & $\underline{gp}$ (MW) & $\overline{gp}$ (MW) & $\underline{gq}$ (MVar) & $\overline{gq}$ (MVar) & $c^2, c^1, c^0$ (\$)\\
        \midrule
        G 1 & 1 & 472.3 & 782.3 & 51.57 & 61.57 & 0.11, 5, 60 \\
        G 2 & 7 & 595.0 & 905.0 & -56.56 & 46.56 & 0.11, 5, 60 \\
        G 3 & 8 & 595.0 & 905.0 & -56.56 & 46.56 & 0.11, 5, 60 \\
        G 4 & 13 & 195.0 & 505.0 & -10.61 & -0.61 & 0.11, 5, 60 \\
        G 5 & 14 & 195.0 & 505.0 & -10.61 & -0.61 & 0.11, 5, 60 \\
        G 6 & 18 & 295.0 & 605.0 & 18.78 & 28.78 & 0.11, 5, 60 \\
        G 7 & 19 & 295.0 & 605.0 & 18.78 & 28.78 & 0.11, 5, 60 \\
        \bottomrule
    \end{tabular}
    \label{tab:gen_data}
\end{table}

\subsection{Experimental Results}
\label{ER}

To evaluate the performance of the Epri21 system under the influence of uncertain geomagnetic storms, 
we use the three different storm levels described in Table \ref{tab:gmd_parameter}: extreme, severe and strong. We also consider results where generator ramping limits range from $0\%$ to $20\%$ of its maximum real power generation ($\overline{gp}$) in $5\%$ steps. 
We use ramping limits to loosely model warning time, i.e., how much time is available for generators to respond once the full characteristics of the storm are known.
To analyze the benefits of capturing uncertain events via the DR optimization, we study the following four cases:

%

\begin{enumerate}
\small
  \item C0: The ACOTS with GIC effects induced by the mean geo-electric fields (i.e., $\mu^E$ and $\mu^N$):\\
  {$ M_{mean} :=$ Min $\Big\{\bsy{a}^T\bsy{y} + \bsy{c}^T\bsy{x}$: \eqref{suc_0}, (\ref{suc_1})-(\ref{suc_2}); $\bsy{\widetilde{\omega}} = [\mu^E, \mu^N]^T \Big\} \Leftrightarrow \bsy{z}^*_{mean}, \bsy{\rho}^*_{mean}, \mathfrak{C}^*_{mean}$} 
  \label{case0}
  
  \item C1: The two-stage DR-OTSGMD:\\
  $M_{dr} := $ Min\Big\{(\ref{rf_11_obj}): \eqref{suc_0}, \eqref{sp_0}-\eqref{sp_6}\Big\} $\Leftrightarrow \bsy{z}^*_{dr}, \bsy{\rho}^*_{dr}, \mathfrak{C}^*_{dr}$
  \label{case1}
  
  \item C2: The two-stage DR-OTSGMD with fixed topology configuration and generator setpoints:\\
  $M_{fm} := $ Min\Big\{(\ref{rf_11_obj}): \eqref{suc_0}, \eqref{sp_0}-\eqref{sp_6}\Big\}; $\bsy{z} = \bsy{z}^*_{mean}$, $\bsy{\rho} = \bsy{\rho}^*_{mean}$\Big\} $\Leftrightarrow \bsy{z}^*_{mean}, \bsy{\rho}^*_{mean}, \mathfrak{C}^*_{fm}$
  \label{case2}
  
  \item C3: The two-stage DR-OTSGMD with fixed topology configuration:\\
  $M_{ft} := $ Min\Big\{(\ref{rf_11_obj}): \eqref{suc_0}, \eqref{sp_0}-\eqref{sp_6}\Big\}; $\bsy{z} = \bsy{1} \} $\Big\} $\Leftrightarrow \bsy{1}, \bsy{\rho}^*_{ft}, \mathfrak{C}^*_{ft}$
  \label{case3}
\end{enumerate}%

\noindent where $M_{\alpha}$ is the optimization model for case $\alpha$; $\bsy{z}_{\alpha}^*$ and $\bsy{\rho}_{\alpha}^*$ represent the optimal first-stage decisions, respectively, and $\mathfrak{C}_{\alpha}^*$ is the objective function value. The values of $\bsy{z}_{\alpha}^*$, $\bsy{\rho}^*_{\alpha}$ and $\mathfrak{C}_{\alpha}^*$ are obtained by solving $M_{\alpha}$. Case C0 identifies the optimal first-stage decisions (i.e., generation and topology) and evaluates the objective according to the mean geo-electric fields (V/km) in the northward and eastward directions. Case C2 identifies a solution for the two-stage DR-OTSGMD model assuming uncertain GMDs. Case C3 finds a solution to the worst-case expected cost $\mathfrak{C}^*_{fm}$ of the DR optimization model using the generation and topology decisions of Case C0. Finally, Case C3 is similar to Case C2, but topology reconfiguration is not allowed.

\subsubsection{Case C0: GIC mitigation for the mean geo-electric fields}

Case C0 assumes that power system operators use the mean value of a storm level (i.e., strong, severe and extreme) to optimize power generation and system topology.  In Table \ref{tb:C0}, we summarize the computational results for Case C0 under different storm levels and ramping limits. The results suggest that the cost of mitigating the impacts of a storm decreases as ramping limits (warning times) increase. The highest cost occurs when generator ramping in not allowed (i.e., $\overline{u}^R_i = 0$). This is the reason why increasing ramping limits from 0 to 20\% leads to a 42.1\% cost decrease for strong GMD events.
Additionally, the total cost varies with storm levels; increasing the intensity of geoelectric field induces a higher cost. Costs increase with storm level; however, this increase is not significant, mainly because the cost changes due to generator dispatch are small. For example, the cost difference between strong and severe storms is only 0.02\% when the ramping limit is 10\%.
\begin{table}[h]
    \centering
    \allowdisplaybreaks
    \renewcommand{\arraystretch}{1.06}
    \footnotesize
    \small
    \captionsetup{font=small}
    \caption[Computational results for Case C0]{Computational results for Case C0 with respect to different storm levels (SL) and ramping limits ($\overline{u}^R_i$). TC denotes the minimum total cost for generator dispatch and load shedding. ${\mu}^E$ and ${\mu}^N$ are the mean values of geo-electric fields for northward and eastward components, respectively. $\bsy{z}^*$ represents the optimal line switching decisions.}
    \setlength{\tabcolsep}{0.8em}
    \begin{tabular}{cccccc}
        \toprule
         SL(55$\degree$--60$\degree$)&$\overline{u}_i^R$ (\%) & TC (\$) & ${\mu}^{E}$, ${\mu}^{N}$ (V/km) & $\bsy{z}^*$ & Wall Time (sec) \\
         \midrule
         \multirow{5}{*}{\textsc{Strong}}&0 & 386,143 & 5.6, 6.6 &2, 17, 19-22, 28 & 28 \\
         &5 & 351,124 &5.6, 6.6 & 2, 18-22, 28 & 21  \\
         &10 & 320,425 &5.6, 6.6 & 2, 17, 19-22 & 24\\
         &15 & 294,077 &5.6, 6.6 & 2, 17, 19-22, 27 & 29 \\
         &20 & 271,751 &5.6, 6.6 & 2, 17, 19-22, 28 & 27\\
         \cmidrule(lr){1-6}
         \multirow{5}{*}{\textsc{Severe}}&0 & 386,091 & 3.1, 3.7 & 2, 13, 15, 18, 21, 22 & 16  \\
         &5 & 351,066 & 3.1, 3.7 & 15, 18, 20-22 & 6 \\
         &10 & 320,363 & 3.1, 3.7 & 2, 15, 18, 21, 22 & 19 \\
         &15 & 294,022 & 3.1, 3.7 & 2, 15, 19-22 & 19    \\
         &20 & 271,696 & 3.1, 3.7 &15, 19-22, 28 & 22 \\
         \cmidrule(lr){1-6}
          \multirow{5}{*}{\textsc{Extreme}}&0 & 386,099 & 3.6, 4.2 &2, 15, 19-22, 27 & 17 \\
         &5 & 351,079 & 3.6, 4.2 & 15, 17, 19-22, 27 & 23 \\
         &10 & 320,379 & 3.6, 4.2 & 2, 15, 17, 19-22 & 19 \\
         &15 & 294,039  & 3.6, 4.2 &15, 17-19, 21, 22, 27 & 6 \\
         &20 & 271,709  & 3.6, 4.2 &2, 15, 19-22, 28 & 17  \\
         \bottomrule
    \end{tabular}
    \label{tb:C0}
\end{table}

\subsubsection{Case C1: DR optimization for GIC mitigation under uncertainty }

In Case C1, uncertain GMD events are modeled via an ambiguity set and we relax the curve of maximum storm strength to 90 extreme points. 
Similar to Case C0, we conduct sensitivity analysis with respect to storm levels and ramping limits. Table (\ref{tb:C1}) summarizes these computational results.
\vspace{0.1cm}
\begin{table}[h]
    \centering
    \allowdisplaybreaks
    \renewcommand{\arraystretch}{1.01}
    \footnotesize
    \small
    \captionsetup{font=small}
    \caption[Computational results for Case C1]{Computational results for Case C1 with respect to different strom levels (SL) and ramping limits ($\overline{u}^R_i$). WETC denotes the worst-case expected total cost. $\widetilde{\nu}^{*E}$, $\widetilde{\nu}^{*N}$ represent the worst-case geo-electric fields for eastward and northward components, respectively. $\bsy{z}^*$ represents the optimal line switching decisions.}
    \setlength{\tabcolsep}{0.5em}
    \renewcommand{\arraystretch}{1.1}
    \begin{tabular}{cccccc}
        \toprule
        SL(55$\degree$--60$\degree$)&$\overline{u}_i^R$ (\%) & WETC (\$) & $\widetilde{\nu}^{*E}$, $\widetilde{\nu}^{*N}$ (V/km) & $\bsy{z}^*$ & Wall Time (sec) \\
         \midrule
         \multirow{5}{*}{\textsc{Strong}}&0 & 533,625 & 4.3, 10.7 &8, 9, 17-22, 27& 3,995\\
         &5 & 486,038 & 4.3, 10.7 & 8, 9, 17-22, 28 & 8,738  \\
         &10 & 455,694 & 4.3, 10.7 & 8, 9, 17-22, 28 & 7,596 \\
         &15 & 428,500 & 4.3, 10.7 & 8, 10, 17-22, 28 & 9,693 \\
         &20 & 406,176 & 4.3, 10.7 & 8, 9, 17-22, 28 & 8,049 \\
        \cmidrule(lr){1-6}
         \multirow{5}{*}{\textsc{Severe}}&0 & 386,342 & 4.7, 4.6 & 2, 8, 18-20, 22, 27, 28 & 1,242 \\
         & 5 & 351,323 & 4.7, 4.6 & 2, 8, 18-21, 27, 28 & 1,061\\
         &10 & 320,622 & 4.7, 4.6 & 2, 8, 17, 19, 20, 22, 27, 28 & 1,364\\
         &15 & 294,249 & 4.7, 4.6 & 2, 8, 17, 19, 20, 22, 27, 28 & 1,347 \\
         &20 & 271,923 & 4.7, 4.6 &2, 8, 18-21, 27, 28 &1,101 \\
         \cmidrule(lr){1-6}
         \multirow{5}{*}{\textsc{Extreme}}&0 & 467,117 &8.4, 3.4 & 2, 3, 17-20, 22, 27, 28 & 4,896 \\
         &5 & 429,554 & 8.4, 3.4 & 2, 3, 17-21, 27, 28 & 3,146   \\
         &10 & 399,898 &8.4, 3.4 & 2, 3, 17-21, 27, 28 & 3,138\\
         &15 & 373,116 &8.4, 3.4 &2, 3, 17-20, 22, 27, 28& 4,838 \\
         &20 & 351,579 & 8.4, 3.4 & 2, 3, 17-21, 27, 28 & 4,185 \\
         \bottomrule
    \end{tabular}
    \label{tb:C1}
\end{table}%
The results show that, similar to Case C0, the worst-case expected total cost (WETC) decreases as the ramping limits (warning times) increase. This decrease is as large as 31.4\% for a strong storm (i.e., $\overline{u}_i^R=20\%$ versus $\overline{u}_i^R=0\%$).

Moreover, we observed that, for a given storm level, the worst-case geo-electric field remains the same at different ramping limits. For example, in extreme storms, $\widetilde{\nu}^{*E}$ and $\widetilde{\nu}^{*N}$ is always 8.4  and 3.4 V/km, respectively. This indicates that, for a given storm level, the worst scenario (empirically) for this system is the same extreme point of $\Omega$. In addition, the difference in the WETC between strong and extreme (and severe) events is considerable. 
For example, the cost difference between strong and severe events is 27.6\% when $\overline{u}^R_i=0$. 
Table \ref{tb:LS} reports the amount and percentage of the WETC due to load shedding. The results suggest that, for Case C1, the increase in WETC under a strong event is primarily due to changes in load shedding. 
An average 13.11\% of the WETC is due to load shedding.

\subsubsection{Case Comparisons: Cost Benefits of the DR optimization}

Figure \ref{Fig:all_cases} summarizes the total cost $\mathfrak{C}^*$ for all cases defined in Section (\ref{ER}). The DR optimization model results in higher costs for all problems solved. These costs are highest when the storm level is strong.
\begin{figure}[h]
  \centering
  \captionsetup{font=small}
  \subfigure[Strong]{
  \label{Fig:c0}
  \includegraphics[scale=0.3]{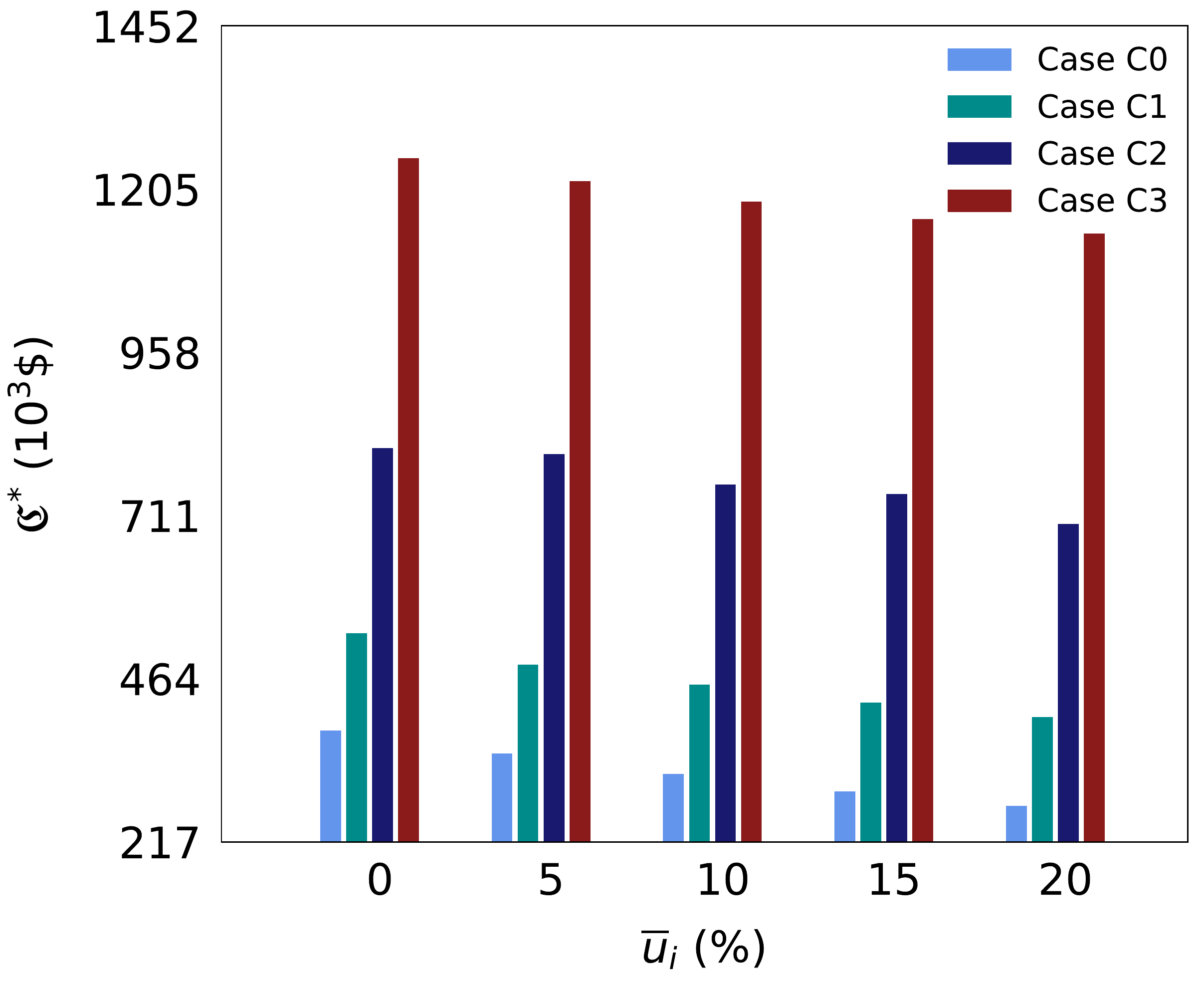}}       
  \subfigure[Severe]{\label{fig:severe_all}
  \includegraphics[scale=0.3]{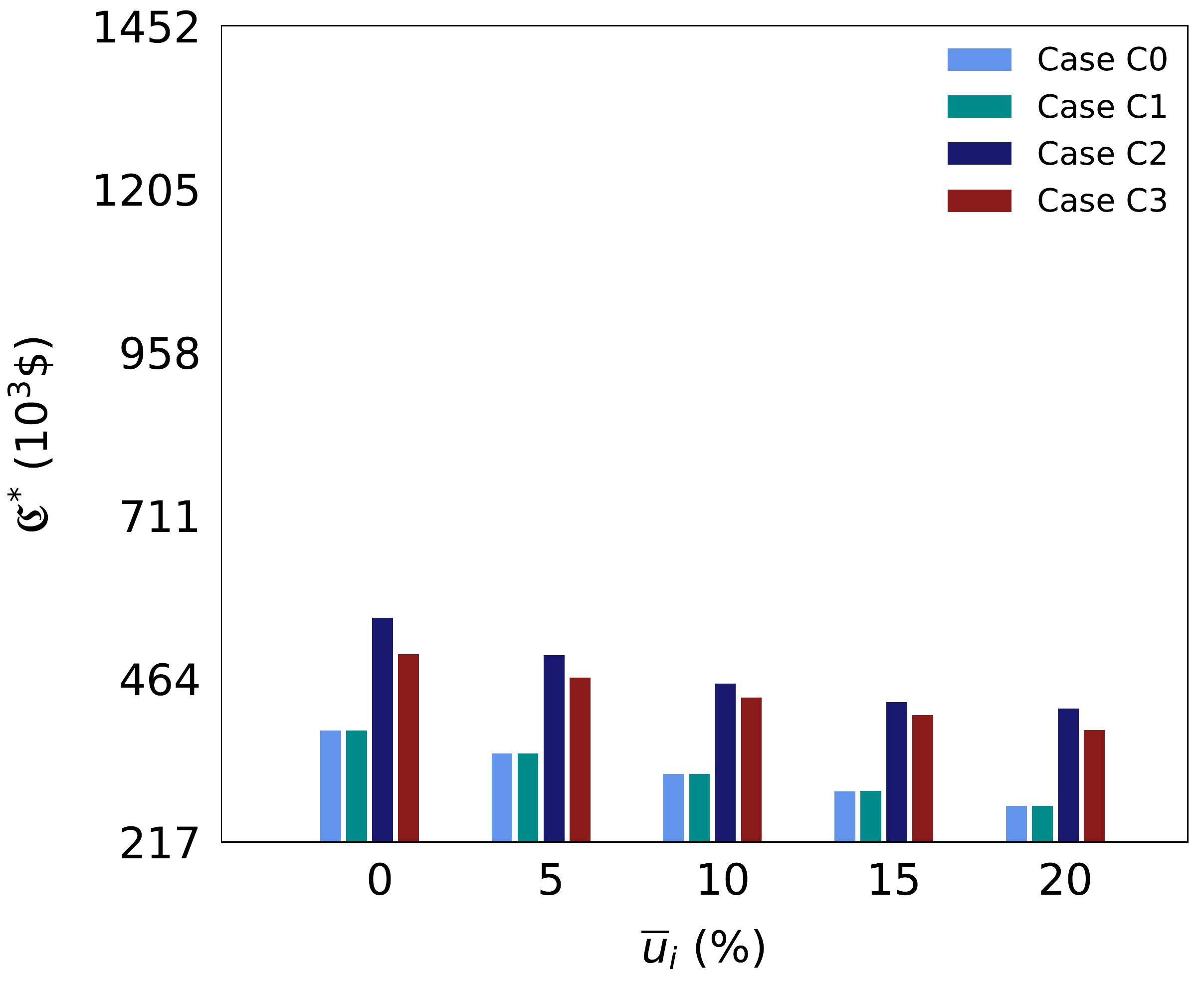}} 
    \subfigure[Extreme]{\label{fig:extreme_all}
  \includegraphics[scale=0.3]{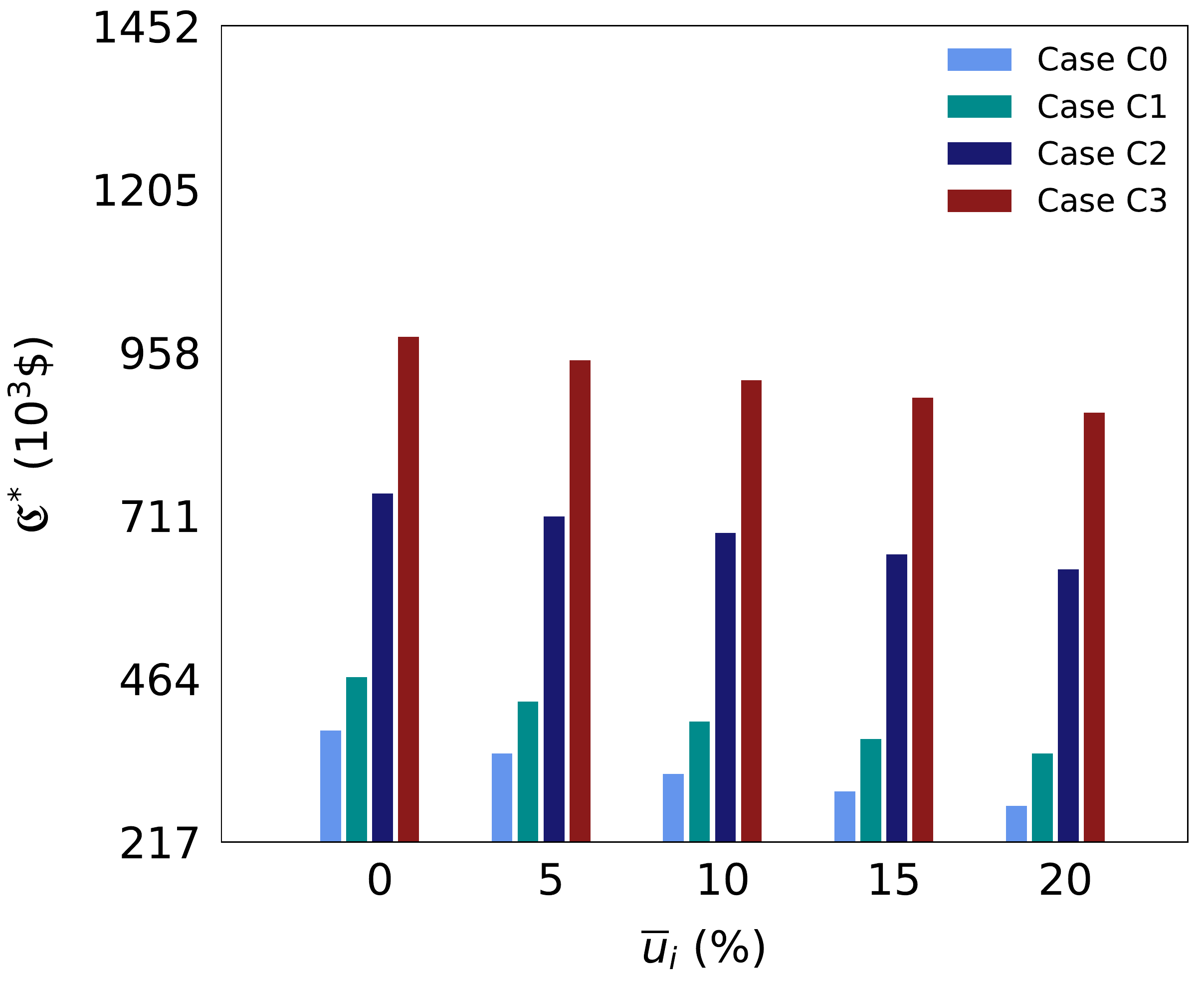}} 
  \caption[Cost comparisons among all cases.]{Cost comparisons among all cases.} 
  \label{Fig:all_cases}
\end{figure}
Tables \ref{tb:C0} and \ref{tb:C1} indicate that the computation time required for solving Case C1 is higher than Case C0. Hence, modeling uncertainty in geo-electric fields results in a significant increases in computational efforts, but it also provides significant 
robustness when compared to deterministic models which ignore uncertainty. 


We next compare the cost of making first stage decisions based on the mean GIC distributionally robust (C2) with first stage decision based on the full distributionally robust model (C1). 
Similar to Case C1, the worst-case expected cost of Case C2 decreases as ramping limits increase. For all storm levels, the cost benefits of the DR-OTSGMD vary with ramping limits and are statistically significant. For example, during a strong storm the cost savings are as much as 68.6\% (i.e., $(\mathfrak{C}^*_{fm}-\mathfrak{C}^*_{dr})/\mathfrak{C}^*_{fm}$). This is because the generation and topology decisions in Case C1 are determined by the DR model; however, these decisions for Case C2 are fixed to Case C0.

We also compare Case C1 with C3 to evaluate the benefits of network reconfiguration. The results displayed in Figure \ref{Fig:all_cases} suggest that line switching decisions significantly lower the cost of GIC mitigation under uncertainty, in particular for strong and extreme storms. For example, Figure \ref{Fig:c0} shows that the WETC is higher than Case C2 and has significantly increased when line switching is not allowed in the two-stage DR-OTSGMD model. Notice that when $\overline{u}^R_i = 10\%$, the cost increase in Case C3, as compared to C1, is 88.4\% and the increase in cost in Case C2, as compared to C1, is 42.8\%.
%

Table \ref{tb:LS} demonstrates the impacts of load shedding on cost. The results indicate that the cost differences between any two cases are primarily because of load shedding. For example, in the case of extreme storms, the total cost of Case C2 is much higher than Case C1. This is due to the observation that the topology control in Case C1 leads to a load shedding cost of 6.99\% on average. In contrast, the fixed topology in Case C2 and C3 results in a load shedding cost of 23.51\% and 61.78\% (on average), respectively. Similarly, for severe storms, load shedding is observed only in Case C2 and C3. As a consequence, Case C2 yields the highest cost in comparison to the other three cases.
\begin{table}[!h]
  \centering
  \allowdisplaybreaks
  \renewcommand{\arraystretch}{1.0}
  \footnotesize
  \small
  \captionsetup{font=small}
  \caption[Load shedding cost for all cases]{Load shedding costs for all cases. The average (Avg.), minimum (Min.), maximum (Max.) and the standard deviation (Std.) of load shedding cost (LSC) are computed over ramping limits from 0\% -- 20\%. For Case C1, C2 and C3, LSC is associated with the worst-case geo-electric fields.
  Solutions displayed in parentheses denote the percentage of the total cost ( $\mathfrak{C}^*$) due to load shedding. ``-" indicates that no load shedding is observed.} \label{tab:loadshedding} 
  \setlength{\tabcolsep}{0.8em}
  \renewcommand{\arraystretch}{1.05}
  \begin{tabular}{lrrrrrrrrr}
  \toprule
      &&\multicolumn{4}{c}{ LSC(\% of the $\mathfrak{C}^*$)}\\
      \cmidrule(lr){3-6}
      SL(55$\degree$--60$\degree$)&Case& Avg. & Min. & Max. &Std.  \\ 
      \midrule
      \multirow{4}{*}{\textsc{Strong}}
      &C0 & - & - & - & - \\
      &C1 & 72,566 (13.11) & 52,190 (12.23) & 105,943 (19.85) & 20,600 (2.87) \\
      &C2 & 160,174 (23.51)& 148,672 (20.95) & 166,419 (26.40) & 6,135 (2.07)   \\
      &C3 & 735,028 (61.78) & 734,609 (58.70) & 735,337 (64.57) & 263 (2.07) \\
      \midrule
      \multirow{4}{*}{\textsc{Severe}}
      &C0 & - & - & - & - \\
      &C1 & - & - & - & -  \\
      &C2 & 701 (0.15) & 520 (0.11) & 983 (0.20) & 179 (0.03)  \\
      &C3 & 67 (0.01) & - & 333 (0.07) & 133 (0.03)\\
      \midrule
      \multirow{4}{*}{\textsc{Extreme}}
      &C0 & - & - & - & - \\
      &C1 & 30,058 (6.99) & - & 75,932 (17.68) & 36,814 (8.57) \\
      &C2 & 140,909 (18.45) & 122,134 (17.20) & 155,781 (20.53) & 11,367 (1.17) \\
      &C3 & 431,128 (45.07) & 374,709 (40.90) & 485,665 (49.46) & 45,865 (3.65) \\
      \bottomrule
 \end{tabular}
 \label{tb:LS}
\end{table}%

\subsubsection{Performance of the CCG Algorithm}

One advantage of the CCG algorithm is the reduced computational cost associated with partial enumeration of significant extreme points. Table \ref{tb:all} summarizes the computational performance of enumerating all vertexes of $\Omega$ to evaluate the worst-case expected total cost. In other words, all second-stage variables and constraints are added to the master problem simultaneously, $\mcal{P^\mcal{K}}$. 
\vspace{-0.7cm}
\begin{subequations}
\allowdisplaybreaks
\begin{align}
    & {\mcal{P^\mcal{K}}} = \min_{\bsy{y}, \bsy{\lambda}, \eta, \bsy{x}} \bsy{a}^T\bsy{y} + \bsy{\mu}^T{\bsy{\lambda}} + \eta \nonumber \\
    &\quad \textit{s.t.} \  \ (\ref{suc_0}) \nonumber \\
    &\label{ch4_ccg1_all}\quad \qquad \eta \geq \bsy{c}^T\bsy{x}^l - {\bsy{\lambda}^T}{\widetilde{\bsy{\omega}}}^l \qquad \forall l \in \mathcal{K}  \\
    &\label{ch4_ccg2_all}\quad \qquad \bsy{Gy} + \bsy{Ex}^l \geq \bsy{h} \qquad \forall l \in \mathcal{K}  \\
    &\label{ch4_ccg3_all}\quad \qquad \bsy{T}({\widetilde{\bsy{\omega}}}^{l})\bsy{y} = \bsy{Wx}^l \qquad \forall l \in \mathcal{K}
\end{align}
\end{subequations}

In Table \ref{tb:all}, we focus on severe storms and solve this problem with different size of extreme points: 3, 9 and 90 (which are generated by partitioning field directions between 0$\degree$ and 180$\degree$ spaced by 60$\degree$, 20$\degree$ and 2$\degree$, respectively). We also set the ``time-out" parameter to 3000 seconds which is about two times the maximum computational time for severe storms in Table \ref{tb:C1}. The results show that when $|\mcal{K}|$ is 90, no solution is found for all values of $\overline{\mu}_i^R$ when the time limit is reached. In contrast, optimal solutions can be found by the CCG within 1400 seconds. In addition, the problem can not even be solved to optimality when there are only 9 extreme points. Even worse, when $\overline{u}_i^R$ equals 10\%, no solution can be found within the time limit. Note that the objective shown in Table \ref{tb:all} is the best known lower bound for $\mcal{P}^\mcal{K}$, thus a larger value indicate a better performance. We also observe that all optimal solutions can be found when the size of extreme points is reduced to 3. However, the solution quality is significantly worse (i.e., the objective value is lower than the results in Table \ref{tb:C1}) since the relaxation of the uncertainty set is too loose.

\begin{table}[h]
  \centering
  \allowdisplaybreaks
  \renewcommand{\arraystretch}{1.15}
  \footnotesize
  \small
  \captionsetup{font=small}
  \caption[Computational results obtained by enumerating all extreme points of $\Omega$.]{Computational results obtained by enumerating all extreme points of $\Omega$. T represents the computation time in seconds and "TO" indicates time-out. The best objective value (bound) found within the time limit is shown under column $\mathfrak{C}^*$ and its relative MIP gap ($\%$) is in parentheses. }
  \setlength{\tabcolsep}{0.8em}
  \begin{tabular}{ccrrrrrrrrrr}
  \toprule
      &&\multicolumn{2}{c}{3 extreme points} &\multicolumn{2}{c}{9 extreme points}&\multicolumn{2}{c}{90 extreme points}\\
      \cmidrule(lr){3-4}
      \cmidrule(lr){5-6}
      \cmidrule(lr){7-8}
      SL(55$\degree$--60$\degree$)&$\overline{u}_i^R$ (\%)& $\mathfrak{C}^*$ (gap$\%$) & $T$ & $\mathfrak{C}^*$ (gap$\%$) &$T$ & $\mathfrak{C}^*$ (gap$\%$) & $T$  \\ 
      \midrule
      \multirow{7}{*}{\textsc{Severe}}
      &0 & 386,255 (0.0) & 1365 & 386,067 (11.1) & TO  & NaN (--) & TO \\
      &5 & 351,236 (0.0) & 1003 & 351,048 (12.6) & TO & NaN (--) & TO  \\
      &10 & 320,536 (0.0) & 592  & NaN (-) & TO & NaN (--) & TO   \\
      &15 & 294,174 (0.0) & 634  & 294,010 (8.9) & TO & NaN (--) & TO\\
      &20 & 271,849 (0.0) & 617 & 271,685 (39.3) & TO & NaN (--) & TO\\
      \bottomrule
 \end{tabular}
 \label{tb:all}
\end{table}%

\section{Conclusions and Future Research}
\label{sec:conclusions}

In this paper, we developed a two-stage DR-OTSGMD formulation for solving OTS problems that include reactive power consumption induced by uncertain geo-electric fields. Given a lack of probability distributions to model geo-magnetic storms, we construct an ambiguity set to characterize a set of probability distributions of the geo-electric fields, and to minimize the worst-case expected total cost over all geo-electric field distributions defined in the ambiguity set. Since the DR formulation is hard to solve, we derive a relaxed reformulation that yields a decomposition framework for solving our problem based on the CCG algorithm. We prove that solving this reformulation yields a lower bound of the original DR model. The case studies based on the modified Epri21 system show that modeling the uncertainty in the GMD-induced geo-electric field is crucial and the DR optimization method is an effective approach for handling this uncertainty. 

There remain a number of interesting future directions. For example, considering additional moment information, such as the variations of random variables, could enhance the modeling of the ambiguity set. Additionally, new approaches are needed to scale the algorithm to larger scale instances. Another potential extension extends the formulation to integrate N-1 security (contingency) constraints in order to increase the resiliency of transmission systems \cite{nagarajan2016optimal} during GMD extreme events. 

\section*{Acknowledgements}

The  work  was  funded  by  the Los Alamos National Laboratory LDRD project
\emph{Impacts of Extreme Space Weather Events on Power Grid Infrastructure: Physics-Based Modelling of Geomagnetically-Induced Currents (GICs) During Carrington-Class Geomagnetic Storms}.
 Los Alamos National Laboratory is operated by Triad National Security, LLC, for the National Nuclear Security Administration of U.S. Department of Energy (Contract No. 89233218CNA000001).
under
the auspices of the NNSA of the U.S. DOE at LANL under Contract No.  DE-
AC52-06NA25396.




\clearpage

\begin{appendices}
\section{Conversion from Magnetic to Geographic Coordinates}
\begin{table}[!htp]
    \centering
    \footnotesize
    \caption{The dipole coefficients ($g_1^0$, $g_1^1$, $h_1^1$) from the International Geophysical Reference Field (IGRF).}
    \begin{tabular}{lccc}
    \toprule
    Epoch& $g_1^0$ & $g_1^1$ & $h_1^1$ \\
    \midrule
1965    &   -30334  & -2119  &   5776\\
1970    &   -30220  & -2068  &   5737\\
1975    &   -30100  & -2013  &   5675\\
1980   &    -29992  & -1956  &   5604\\
1985      & -29873  & -1905   &  5500\\
1990  &     -29775 & -1848 &    5406\\
    1995& -29692& -1784&5306\\
    2000& -29619& -1728&5186\\
    2005& -29554& -1669&5077\\
    2010& -29496& -1586&4944\\
    2015& -29442& -1501&4797\\ 
    \bottomrule
    \end{tabular}
    \label{tab:my_label}
\end{table}
We denote geomagnetic (MAG) and geographic (GEO) coordinates as $Q_m = [x_m, y_m, z_m]^T$ and $Q_g = [x_g, y_g, z_g]^T$, respectively. In some coordinate systems the position $[x, y, z]^T$ is often defined by latitudes $\varphi$, longitude $\Theta$ and radial distance $R$, such that:
\begin{subequations}
\small
\begin{align}
    & x = R\cos(\varphi)\cos(\Theta), \quad 
    y = R\cos(\varphi)\sin(\Theta),  \quad 
    z = R\sin(\varphi) 
\end{align}
\end{subequations}%
\noindent And
\begin{subequations}
\small
\begin{align}
    & R = \sqrt{(x^2+y^2+z^2)}, \quad 
    \varphi = \arctan\left(\frac{z}{\sqrt{x^2+y^2}}\right), \quad
    \Theta=\left\{
            \begin{array}{ll}
             \arccos\left(\frac{x}{\sqrt{x^2+y^2}}\right), \ \text{if $y \geq 0$} \\
            -\arccos\left(\frac{x}{\sqrt{x^2+y^2}}\right), \ \text{otherwise}   \\
                \end{array}
              \right.
\end{align}
\end{subequations}%
\noindent Using this notation, as described in \cite{hapgood1992space}, the MAG coordinates are converted to the GEO coordinates using the following equation:
\begin{subequations}
\small
\begin{align}
   & Q_m = T \cdot Q_g
\end{align}
\end{subequations}%
\noindent where
\begin{subequations}
\small
\label{eq:GEO}
\begin{align}
& T = \langle \varphi-90\degree, Y\rangle \cdot \langle \Theta, Z\rangle \\
& \langle \varphi-90\degree, Y\rangle = \begin{bmatrix}
    \cos(\varphi-90\degree)& 0 & \sin(\varphi-90\degree)  \\
    0 & 1  & 0 \\
    -\sin(\varphi-90\degree) & 0 & \cos(\varphi-90\degree)
\end{bmatrix}, \ \ \langle \Theta, Z\rangle = \begin{bmatrix}
    \cos(\Theta)& \sin(\Theta) & 0 & \\
    -\sin(\Theta) & \cos(\Theta) & 0 \\
    0 & 0 & 1
\end{bmatrix}  \\
& \Theta = \arctan\left(\frac{h_1^1}{g_1^1}\right), \quad
\varphi = 90\degree-\arcsin\left(\frac{g_1^1\cos(\Theta) + h_1^1\sin(\Theta)}{g_1^0}\right)
\end{align}
\end{subequations}

\end{appendices}

\bibliographystyle{IEEEtran}
\bibliography{references.bib}

\end{document}